\crefname{equation}{}{}
\crefname{lem}{Lemma}{Lemmas}
\crefname{thm}{Theorem}{Theorems}
\newcommand{\snmii}[1]
{
  \left\vert\kern-0.25ex
  \left\vert\kern-0.25ex
  \left\vert
  #1
  \right\vert\kern-0.25ex
  \right\vert\kern-0.25ex
  \right\vert
}
\newtheorem{lem}{Lemma}[section]
\newtheorem{rem}{Remark}[section]
\newtheorem{thm}{Theorem}[section]
\newtheorem{example}{Example}[section]
\numberwithin{equation}{section}
\title{Extended finite element methods for optimal control problems governed by Poisson equation in non-convex domains
	\thanks
	{
		This work was supported by  National Natural Science Foundation of China (11771312).
}}
\author{
	\ Tao Wang  \thanks{School of Mathematics, Sichuan University, Chengdu 610064, China. Email: wangtao5233@hotmail.com }, Chao Chao Yang \thanks{School of Science, Chongqing University of Technology, Chongqing 400054, China. Email: yangchaochao9055@163.com}, \
	Xiaoping Xie \thanks{Corresponding author. School of Mathematics, Sichuan University, Chengdu 610064, China. Email: xpxie@scu.edu.cn}
}
\date{}
\begin{document}
		\maketitle
		
			\begin{abstract}
This paper  analyzes two eXtended finite element methods (XFEMs) for  linear quadratic  optimal control problems governed by Poisson equation in non-convex domains.   We follow the variational discretization concept  to discretize the continuous problems, and apply  an XFEM with a cut-off function and a classic XFEM with a fixed enrichment area to   discretize the state and co-state equations.  Optimal error estimates are derived for the state, co-state and control. Numerical results confirm our theoretical results.
		\end{abstract}	
	
\noindent\textbf{Keywords:} extended finite element method, optimal control, non-convex domain, variational discretization concept.

		\section{Introduction}
We consider the following linear quadratic optimal control problem: 
\begin{equation}\label{eqobjective}
       \text{min}~ J(y, u):=\frac{1}{2}\int_\Omega (y-y_d)^2~dx+\frac{\alpha}{2}\int_\Omega u^2~dx
\end{equation}
for $ (y,u)\in H^1_0(\Omega)\times L^2 (\Omega) $ subject to Poisson equation
\begin{equation}\label{eqstrongstate}
\left\{
 \begin{array}{rll}
 & -\Delta y=u+f & \text{ in }\Omega, \\
 & y=0 & \text{ on }\partial\Omega, \\
\end{array}
\right.
\end{equation}
with the control constraint
\begin{equation}\label{eqcontrol1}
u_0\leq u \leq u_1, \text{ a.e. on } \Omega,
\end{equation}
	where $\Omega$ is a bounded polygonal domain in $\mathbb{R}^2$ with a single re-entrant corner of angle $\frac{\pi}{\beta}$, $\beta\in[\frac12,1]$. $y_d\in L^2(\Omega)$ is the desired state to be achieved by controlling  $u$. $\alpha$ is a positive constant and  $f, u_0, u_1 \in L^2 (\Omega)$ with $u_0\leq u_1$ a.e. on $\Omega$. For the sake of simplicity, we choose homogeneous
	boundary condition on $\partial\Omega$. In fact, we can obtain similar results for other boundary conditions.
	
	For the Poisson problem \eqref{eqstrongstate}  in non-convex domains,  it is well-known that the weak solution $y$ is generally not in $H^2(\Omega)$, due to the singularities at the corner. The low regularity  may lead to  reduced accuracy for finite element approximations \cite{Babu1972A}.   In literature there are two ways to improve the accuracy. The first way  is to use  graded meshes  (cf. \cite{Oganesyan1968Variational,Babuka1979Direct,Schatz1979Maximum,Apel1996Graded}).  The second  way is to use some singular basis functions which characterize the singularity of the solution around  the corner; see, for instance, the classic singular enrichment method $\cite{Strang1973An}$, the dual singular function method \cite{Blum1982On}, and the singular complement method  \cite{Jr2003The}.  
	 Notice that when $\Omega$ is a  crack domain ($\beta=\frac12$),  
	 all the above methods have to use  body-fitted meshes. However, it is often difficult or expensive to construct such kinds of meshes, especially in  time dependent problems. 
	 
%
%
	 
	 In the past few decades, many numerical methods have been developed for the optimal control problem \eqref{eqobjective}-\eqref{eqcontrol1} in convex domains, see \cite{Casas86con,Falk73app,Tiba76err,Becker00ada,Li02ada,Schneider16pos,Benedix09pos,mmuller10goa,Rosch12pri,Rosch12pos,Rosch17rel,Gong17ada,Li02ada, Hinze05var,Hinze2009Variational,Meyer2003Superconvergence,Yang2018}.  
However, 	 for   optimal control 
problems in non-convex domains, there is only limited  research work. In \cite{Apel2007Optimal,Apel2012Finite,Apel2009Optimal},  finite element error estimates were derived on  graded meshes.
	 
The extended finite element method (XFEM, also called generalized finite element method(GFEM)) is known to be a widely-used tool 
  for the analysis of problems with 
singularities \cite{Belytschko1999Elastic, moes1999a, Strouboulis00des, Strouboulis2006The,Belyschkos09rev,Soghrati2012A,babuska1997the,Kergrene2016Stable,Babuska2010Optimal,Berrone2013On,Babu2017Strongly}. With additional basis functions characterizing the singularity  added into the standard approximation space,   XFEM  does not need body-fitted meshes, and thus avoid complicated meshes. 
	
	 In this paper, we consider an XFEM for the optimal control \eqref{eqobjective}-\eqref{eqcontrol1} problem in non-convex domains. We follow the variational discretization concept $\cite{Hinze05var,Hinze2009Variational}$ to discretize the continuous problem. 
	 Optimal error estimations are derived for the state, co-state and control.
	 We apply the semi-smooth Newton method  to solve the resultant nonlinear discrete system. 
	 
	The rest of the paper is arranged as follows. Section 2  gives some notations, the optimality conditions, and regularity results for the optimal control problem. Section 3  introduces the XFEM, and shows several theoretical results associate with XFEM.  Section 4 is devoted to the discrete optimal control problem,  the discrete optimality conditions and   error estimates  for the state, co-state and control.  Section 5 gives an iteration algorithm for the discrete system. Finally, Section 6 provides numerical results to verify the theoretical analysis. 
	\section{Preliminary}
	 Let  $\Omega$ be a polygonal domain   with   a single re-entrant corner of angle $\frac{\pi}{\beta}$, $\beta\in[\frac12,1]$. And (0,0) is the concave point of $\Omega$.
For  
any nonnegative integer $m$, let $H^{m}(\Omega)$ and $H^{m}_0(\Omega)$  denote the standard    Sobolev spaces on $\Omega$ with    norm $\|\cdot\|_{m}$ and semi-norm $|\cdot|_{m}$. In particular, $H^0(\Omega)=L^2(\Omega)$, with  the standard $L^2$-inner product $(\cdot,\cdot)$. 
	 
	 For $u\in L^2(\Omega)$, the weak formulation of the state equation \eqref{eqstrongstate} is  as follows:  find $y \in H^1_0(\Omega)$ such that
	 \begin{equation}
	 \label{eqweakform}
	a(y,v) =(u+f,v), \quad \forall v \in H^1_0(\Omega),
	 \end{equation} 
	where  $a(y,v):=( \nabla y ,\nabla v)$. 

 \begin{figure} [!hbtp]
	\centering
	\includegraphics[width=8cm]{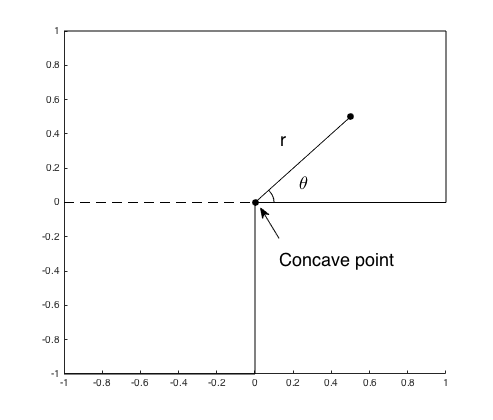}
	\caption{Polar coordinates.}
	\label{fgpolarcor}
\end{figure}
	 
	Introduce a singular function 
 		\begin{equation*}
 		S_\beta(r,\theta)= r^{\beta}sin{(\beta\theta)} .
 		\end{equation*} 
			It is well known that the weak solution $y$ is generally not in $H^2(\Omega)$. According to   $\cite{Blum1982On,Grisvard1985Elliptic}$, $y$ has a singular part  
	 \begin{equation}
	 \label{ys}
	 y_s=\chi(r) K_yS_\beta(r,\theta)
	 \end{equation} 
	 such that 
		  \begin{equation}
	 \label{y-ys}
	 y-y_s  \in H^2(\Omega).
 \end{equation} 
		 Here  $r$ and $\theta$ are polar coordinates 
		 with $0\leq\theta\leq 2\pi$  (cf. Figure \ref{fgpolarcor} for an $L$-shaped domain).  $K_y$ is a positive constant depending on $u$ 
		and  
		 can be regarded as a linear functional of $u+f$ (cf. \cite{Jr2003The,Shen2010Stability}). $\chi\in W^{2,\infty}(0,\infty)$ is a cut-off function defined by
	 	\begin{equation}\label{chi}
		\left\{\begin{array}{ll}
	 	\chi(r)=1, &  \quad if \  r<r_0, \\
	 	0<\chi(r) < 1, & \quad if \ r_0 < r < r_1, \\
	 	\chi(r)=0,& \quad if \ r_1 < r,
	 	\end{array}
		\right.
		\end{equation}
	where   $r_0,r_1$ are two   constants with  $0<r_0<r_1$. 	And, throughout  the paper, we use ``$p\lesssim q$'' to denote ``$p \leq C q$, where  $C$ is a generic positive constant   independent of the solution $u$ and the finite element mesh size  $h$. 

		In view of \eqref{ys}-\eqref{y-ys}, we assume the following regularity on  the solution $y$  to the problem \eqref{eqweakform} (cf. \cite{Shen2010Stability}):
\begin{equation}		 
	 		\label{leregularity}
	 		K_y+\|y- y_s\|_2\lesssim \|u\|_0+ \|f\|_0.
	\end{equation}		
	
	\begin{rem}\label{rem2.1}
	 Set 
	\begin{equation}
	\label{ys-1}
	\widetilde{y_s}:=K_y S_\beta(r,\theta),
	\end{equation}
	 and let $B(r)$ be  a ball with center $(0,0)$ and radius $r$. Then, for any given enrichment radius  $r_s>0$, 
	 by \eqref{leregularity} we easily get 
 		\begin{align}\label{leregularity1}
 			\|y-\widetilde{y_s}\|_2+\|\widetilde{y_s}\|_1 + \|\widetilde{y_s}\|_{2,\Omega \setminus B(r_s/2)} & \lesssim \|u\|_0+ \|f\|_0 .
 		\end{align} 
	In fact, from \eqref{leregularity} it follows 
	 $$\|\widetilde{y_s}\|_1 + \|\widetilde{y_s}\|_{2,\Omega \setminus B(r_s/2)}=K_y(\|S_\beta\|_1+\|S_\beta\|_{2,\Omega \setminus B(r_s/2)})  \lesssim \|u\|_0+ \|f\|_0.$$ 
	By the definition of the cut-off function $\chi(r)$ we have 
	$$ \|(1-\chi)\widetilde{y_s}\|_2= \|(1-\chi)\widetilde{y_s}\|_{2,\Omega \setminus B(r_0)}\leq \|\widetilde{y_s}\|_{2,\Omega \setminus B(r_0)} \lesssim \|u\|_0+ \|f\|_0,$$
	which, together with   the triangle inequality, yields
	 $$\|y-\widetilde{y_s}\|_2 \leq \|y-y_s\|_2+\|(1-\chi)\widetilde{y_s}\|_2 \lesssim \|u\|_0+ \|f\|_0.$$
	Hence, the estimate \eqref{leregularity1} holds.

	\end{rem}
%
 		
		By Following the standard optimality technique in $\cite{Tr2010Optimal}$, we can easily get the optimality conditions of the optimal control problem \eqref{eqobjective}-\eqref{eqcontrol1}.
		
 			\begin{lem} \label{leoptimalcondition}
 			The optimal control problem   \eqref{eqobjective}-\eqref{eqcontrol1} has a unique solution $(y,p,u)\in H_0^1(\Omega)\times H_0^1(\Omega)\times U_{ad}$ such that 
 			\begin{align}
 			& a(y,v)=(u+f,v),~~\forall v\in H_0^1(\Omega),\label{eqstate}\\
 			& a(v,p)=(y-y_d,v),~~\forall v\in H_0^1(\Omega),\label{eqcostate}\\
 			& (p+a u,v-u) \geq 0,~~\forall v\in U_{ad},\label{eqprojection}
 			\end{align}
where \begin{align}\label{Uad}
U_{ad}:=\{ v\in L^2(\Omega):u_{a}\leq v \leq u_{b} \ \text{a.e. in} \ \Omega \}.
\end{align}
		\end{lem}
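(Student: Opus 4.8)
The plan is to establish the optimality system \eqref{eqstate}--\eqref{eqprojection} as the first-order necessary and sufficient conditions for the convex minimization problem \eqref{eqobjective}--\eqref{eqcontrol1}. First I would verify that the problem is well-posed: the control-to-state map $u \mapsto y(u)$ defined by \eqref{eqweakform} is affine and continuous from $L^2(\Omega)$ into $H^1_0(\Omega)$ by the Lax--Milgram theorem, since $a(\cdot,\cdot)$ is coercive and bounded on $H^1_0(\Omega)$. Substituting $y=y(u)$ into the cost functional yields a reduced functional $j(u):=J(y(u),u)$ that is quadratic, strictly convex (the term $\tfrac{\alpha}{2}\|u\|_0^2$ with $\alpha>0$ guarantees strict convexity and coercivity), continuous, and defined over the nonempty, closed, convex, bounded admissible set $U_{ad}$. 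Hence by the standard theory of convex optimization in Hilbert spaces (see \cite{Tr2010Optimal}), a unique minimizer $u\in U_{ad}$ exists.

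Next I would derive the variational inequality characterizing the minimizer. Since $j$ is Gateaux differentiable and convex on the convex set $U_{ad}$, the minimizer $u$ is characterized by $j'(u)(v-u)\ge 0$ for all $v\in U_{ad}$. The key computation is to evaluate $j'(u)$ explicitly. Differentiating the state equation shows that the directional derivative $y'(u)\delta$ in direction $\delta$ solves $a(y'(u)\delta, w)=(\delta,w)$ for all $w\in H^1_0(\Omega)$. Introducing the adjoint state $p\in H^1_0(\Omega)$ as the solution of the co-state equation \eqref{eqcostate}, namely $a(w,p)=(y-y_d,w)$ for all $w$, allows me to rewrite the derivative of the tracking term: taking $w=y'(u)\delta$ in \eqref{eqcostate} and $w=p$ in the linearized state equation gives $(y-y_d, y'(u)\delta)=a(y'(u)\delta,p)=(\delta,p)$. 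Consequently $j'(u)\delta = (p+\alpha u, \delta)$, and the variational inequality becomes exactly \eqref{eqprojection}.

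The main obstacle, though largely technical, is the adjoint manipulation: one must justify that inserting the adjoint state correctly transfers the tracking-term derivative onto the control, which hinges on the symmetry/duality pairing between the linearized state equation and the co-state equation and on the regularity guaranteeing that all the pairings are well-defined in $L^2(\Omega)$. Since $y, p \in H^1_0(\Omega)\subset L^2(\Omega)$ and $u\in U_{ad}\subset L^2(\Omega)$, every term is admissible, so this step is clean once the duality is set up. Finally I would note that the necessary conditions are also sufficient by convexity, so the triple $(y,p,u)$ solving \eqref{eqstate}--\eqref{eqprojection} is precisely the unique optimal solution, completing the proof. I would remark that the regularity assumption \eqref{leregularity} is not needed for existence and uniqueness here; it is reserved for the error analysis in later sections.
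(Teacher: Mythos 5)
Your argument is correct and is precisely the ``standard optimality technique'' from \cite{Tr2010Optimal} that the paper itself invokes without further detail: well-posedness of the control-to-state map via Lax--Milgram, existence and uniqueness from strict convexity and coercivity of the reduced functional over the nonempty closed convex set $U_{ad}$, and the variational inequality \eqref{eqprojection} obtained by expressing $j'(u)$ through the adjoint state. Your closing remark that the regularity assumption \eqref{leregularity} plays no role here is also consistent with the paper, which reserves it for the error analysis.
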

We note that  $p$ is called the co-state or adjoint state, and  \eqref{eqcostate} is   the co-state equation.  

\begin{rem}
Let  
\begin{equation}
 p_s:=\chi(r) K_pS_\beta(r,\theta),
 \end{equation}
 with $K_p>0$, be the  singular part of the solution $p$ to the  co-state equation  \eqref{eqcostate} such that 
 \begin{equation}		 
	 		\label{regularity-p0}
	 		K_p+\|p- p_s\|_2\lesssim \|y-y_d\|_0, 
	\end{equation}	
	which, together with the fact $\|y\|_1\lesssim \|u\|_0$, means
	 \begin{equation}		 
	 		\label{regularity-p}
	 		K_p+\|p- p_s\|_2 \lesssim   \|u\|_0+\|y_d\|_0+\|f\|_0.
	\end{equation}	
	Similar to Remark \ref{rem2.1}, set 
	$$\quad \widetilde{p_s}:=  K_pS_\beta(r,\theta),$$
	then from \eqref{regularity-p} we have
			\begin{align}\label{leregularity1-tilde p}
 			\|p-\widetilde{p_s}\|_2+\|\widetilde{p_s}\|_1 + \|\widetilde{p_s}\|_{2,\Omega \setminus B(r_s/2)} & \lesssim \|u\|_0+\|f\|_0+\|y_d\|_0.
 		\end{align} 

	\end{rem}
\begin{rem} 
 		The variational inequality \eqref{eqprojection} means that 
\begin{equation}\label{u-P}
u=P_{U_{ad}} \left (-\frac{1}{\alpha} p \right),
\end{equation} 
where $P_{U_{ad}}$ denotes the $L^2-$projection onto $U_{ad}$. \end{rem}
  		
 		\section{XFEM for state and co-state equations}
 	From Lemma \ref{leoptimalcondition},     the  state $y$ and the co-state $p$ can respectively be viewed as the solutions to the following two problems.

Find $y\in H_0^1(\Omega)$ such that
\begin{equation} \label{stat3}
a(y,v)=(u+f,v), ~~\forall v\in H_0^1(\Omega).
\end{equation}

Find $p\in H_0^1(\Omega)$ such that
\begin{equation} \label{co-stat3}
 a(v,p)=(y-y_d,v),~~\forall v\in H_0^1(\Omega).
\end{equation}

%
%
 		\subsection{Formulations of XFEM}
 		Let $\mathscr{T}_h$ be a shape-regular triangulation of $\Omega$ consisting of open triangles  with mesh size $h=\max_{K\in \mathscr{T}_h}h_K$, where $h_K$ denotes the diameter of $K\in \mathscr{T}_h$. Denote by $\Theta=\{a_i: i\in 1,2,\cdots, I\}$ the set   of   all the  vertexes  of all  triangles in $\mathscr{T}_h$.


		For $\forall a_i \in \Theta$, let $\varphi_i$ be the corresponding nodal basis function of the continuous linear finite element method with respect to   $\mathcal{T}_h$. 
Let $r_s>0$ be a prescribed constant called enrichment radius. We define 
 a vertex set 
$$\Theta_S:=\left\{a_i\in \Theta: \ \text{the distance between $a_i$   and the concave point is less than or equal to $r_s$}\right\}.$$
In particular, when   $\Omega$ is a cracked domain, i.e.  $\beta=\frac12$,  
we	set	
	$$\Theta_H :=
	\left\{a_i\in \Theta: \ \text{the support of $\varphi_i$ is completely cut by the crack of $\Omega$}\right\},
	$$
and define 
		the Heaviside function $H(x)$: for any $x=(r\cos\theta,r\sin\theta)\in \Omega$, 
 		\begin{equation*}
 		H(x)=
 		\begin{cases}
 		+1 &\mbox{if $x \cdot n \geq 0 $,} \\
 		-1 &\mbox{if $x\cdot n < 0$,}
 		\end{cases}
 		\end{equation*}
		where   $n$ is a unit normal vector along the crack. 
		
	With the above notations, set
	$$W_1:= span\{ \varphi_i: a_i\in \Theta\}+span\{ \chi (r) S_{\beta}\},$$
	$$W^*_1:= span\{ \varphi_i: a_i\in \Theta \} +span\{  \varphi_i H: a_i\in \Theta_H\}+span\{ \chi (r)S_{\frac12}\},$$
	$$W_2:= span\{ \varphi_i: a_i\in \Theta\}+span\{ \varphi _i S_{\beta}:a_i\in \Theta_S \},$$
	$$W^*_2:= span\{ \varphi_i: a_i\in \Theta \} +span\{  \varphi_i H: a_i\in \Theta_H\}+span\{ \varphi _i S_{\beta}:a_i\in \Theta_S \},$$
	and $V_0:=\{v: \ v=0 \ \text{on} \ \partial \Omega\}$. Then we 
 introduce  the following two extended finite element spaces:
	 $$V_h^1:=\left\{
	 \begin{array}{ll}\small 
	W_1\bigcap V_0  & \text{if}\  \frac12<\beta\leq1,\\\\
	W^*_1\bigcap V_0 & \text{if}\  \beta=\frac12,
	 \end{array}
	 \right. $$ 
	 $$V_h^2:=\left\{
	 \begin{array}{ll}\small 
	W_2\bigcap V_0  & \text{if}\  \frac12<\beta\leq1,\\\\
W^*_2\bigcap V_0	& \text{if}\  \beta=\frac12.
	 \end{array}
	 \right. $$  
 		
 		
 		It is easy to observe that 
		\begin{align}\label{subset}
		V_h^i\subset H_0^1(\Omega), \quad i=1,2.
		\end{align}
 		
		Take $V_h=V_h^1$ or $V_h^2$, then   
 		the XFEM formulations for the weak problems of the state $y$ and   co-state $p$ read as follows: 
		
 		Find $y^h\in V_h$ such that
		\begin{equation}
 		a(y^h, v_h)=(u+f,v_h), \quad \forall  v_h \in V_h. \label{eqdstate}
 		\end{equation}
		
		Find $p^h\in V_h$ such that
 		\begin{equation}
 		a(v_h, p^h)=(y-y_d,v_h), \quad \forall  v_h \in V_h.\label{eqdcostate}
 		\end{equation}
		
		\begin{rem}\label{rem3.1}
		The XFEM with  $V_h=V_h^1$  was called an XFEM with a cut-off function   \cite{Nicaise2011Optimal}, and   the one with $V_h=V_h^2$ was  called a  classic XFEM with a fixed enrichment area \cite{Belytschko1999Elastic}.

		\end{rem}
 	\subsection{Error estimates of XFEM}
	
	According to  \cite{Nicaise2011Optimal}, it holds the following error estimates.
 		
	\begin{lem}\label{lem3.1} Let $y,p$ be the solutions to the continuous problems \eqref{stat3} and  \eqref{co-stat3} respectively such that  the regularity conditions \eqref {leregularity} and \eqref{regularity-p} hold, and $y^h,p^h$ be the solutions to the discrete schemes \eqref{eqdstate} and \eqref{eqdcostate} respectively. Then
	the    estimates
			\begin{align}
 				\| y-y^h\|_1 & \lesssim h\|y-y_s\|_2\lesssim h (\|u\|_0+\|f\|_0),  \label{h1erry} \\ 
				\| p-p^h\|_1 & \lesssim h\|p-p_s\|_2\lesssim h\left( \|u\|_0+\|f\|_0+\|y_d\|_0\right)  \label{h1errp}
		\end{align}
		hold for $V_h=V_h^1,$ and     the estimates 
		\begin{align}
 				\| y-y^h\|_1 & \lesssim h\left(\|y-\widetilde{y_s}\|_2+\|\widetilde{y_s}\|_1 + \|\widetilde{y_s}\|_{2,\Omega \setminus B(r_s/2)} \right)\lesssim h (\|u\|_0+\|f\|_0), \\
				\| p-p^h\|_1 & \lesssim h\left(\|p-\widetilde{p_s}\|_2+\|\widetilde{p_s}\|_1 + \|\widetilde{p_s}\|_{2,\Omega \setminus B(r_s/2)} \right)\lesssim h\left( \|u\|_0+\|f\|_0+\|y_d\|_0\right)
 		\end{align}
		hold for $V_h=V_h^2$.
 		\end{lem}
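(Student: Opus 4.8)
The plan is to exploit the conformity $V_h^i\subset H_0^1(\Omega)$ recorded in \eqref{subset} together with the fact that $a(\cdot,\cdot)=(\nabla\cdot,\nabla\cdot)$ is symmetric, bounded and coercive on $H_0^1(\Omega)$. Then the discrete problems \eqref{eqdstate} and \eqref{eqdcostate} are exactly the Galerkin projections of \eqref{stat3} and \eqref{co-stat3}, Galerkin orthogonality holds, and C\'ea's lemma yields $\|y-y^h\|_1\lesssim\inf_{v_h\in V_h}\|y-v_h\|_1$ and $\|p-p^h\|_1\lesssim\inf_{v_h\in V_h}\|p-v_h\|_1$. Everything therefore reduces to a best-approximation estimate, for which it suffices to construct one good approximant $v_h\in V_h$ of $y$ (and, identically, of $p$). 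The final data bounds then follow by substituting the regularity estimates \eqref{leregularity}, \eqref{regularity-p} (for $V_h^1$) and \eqref{leregularity1}, \eqref{leregularity1-tilde p} (for $V_h^2$).

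For $V_h=V_h^1$ the approximant is immediate. By \eqref{ys}--\eqref{y-ys} I split $y=y_s+(y-y_s)$, where the singular part $y_s=K_y\chi(r)S_\beta$ lies in the enrichment span $\mathrm{span}\{\chi(r)S_\beta\}$ and the regular part satisfies $y-y_s\in H^2(\Omega)$. Choosing $v_h:=I_h(y-y_s)+y_s$ with $I_h$ the standard piecewise-linear Lagrange interpolant, the singular part is reproduced exactly, so $y-v_h=(y-y_s)-I_h(y-y_s)$ and the classical interpolation estimate gives $\|y-v_h\|_1\lesssim h\|y-y_s\|_2$, i.e. \eqref{h1erry}. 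One checks $v_h\in V_0$ since both $y-y_s$ and $y_s$ vanish on $\partial\Omega$ (the latter because $S_\beta$ vanishes on the two edges meeting at the corner and $\chi$ removes it elsewhere). The co-state bound \eqref{h1errp} is obtained verbatim with $(y,y_s,K_y)$ replaced by $(p,p_s,K_p)$. In the crack case $\beta=\tfrac12$ the jump of the solution across the crack is reproduced exactly by the Heaviside enrichment $\{\varphi_i H:a_i\in\Theta_H\}$, so the same splitting argument applies in $W^*_1$.

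For $V_h=V_h^2$ the obstacle is that the enrichment $\mathrm{span}\{\varphi_iS_\beta:a_i\in\Theta_S\}$ reproduces $S_\beta$ only where the partition-of-unity identity $\sum_{a_i\in\Theta_S}\varphi_i=1$ holds, namely on the interior elements all of whose vertices lie in $\Theta_S$; there the discrete weight $\Lambda:=\sum_{a_i\in\Theta_S}\varphi_i$ behaves like the smooth cut-off $\chi$ of the first case. I would again write $y=\widetilde{y_s}+(y-\widetilde{y_s})$ with $\widetilde{y_s}=K_yS_\beta$ and $y-\widetilde{y_s}\in H^2$ by \eqref{leregularity1}, take the enrichment coefficients equal to $K_y$ (producing $K_y\Lambda S_\beta$), and absorb the regular part together with the factor $K_y(1-\Lambda)S_\beta$ into the standard piecewise-linear part. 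The genuine difficulty is the \emph{blending region}, the $O(h)$-wide annulus of elements having some but not all vertices in $\Theta_S$, where $\Lambda<1$ and exact reproduction fails; this is the one step I expect to require real work. Since for $h$ small this annulus lies in $\Omega\setminus B(r_s/2)$, there $\widetilde{y_s}$ is smooth and its interpolation error is $\lesssim h\|\widetilde{y_s}\|_{2,\Omega\setminus B(r_s/2)}$, while the transition of $\Lambda$ from $1$ to $0$ produces the additional $\lesssim h\|\widetilde{y_s}\|_1$ contribution; for this quantitative blending estimate I would follow the analysis of \cite{Nicaise2011Optimal}. Summing the regular, reproducing and blending contributions gives the first displayed $V_h^2$-estimate, and \eqref{leregularity1} then yields the data bound; the co-state estimate follows identically using \eqref{leregularity1-tilde p}.
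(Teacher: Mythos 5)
Your argument is essentially the one the paper relies on: the paper gives no proof of Lemma \ref{lem3.1} at all, but simply cites \cite{Nicaise2011Optimal}, and your C\'ea-plus-approximant sketch (exact reproduction of the singular part by the enrichment, standard interpolation of the $H^2$ remainder, and a blending estimate on the transition annulus for $V_h^2$) is a faithful reconstruction of that reference's argument. The one genuinely hard step you identify, the quantitative blending bound producing the $h\|\widetilde{y_s}\|_1$ and $h\|\widetilde{y_s}\|_{2,\Omega\setminus B(r_s/2)}$ terms, is precisely what you (like the paper) defer to \cite{Nicaise2011Optimal}, so there is no substantive divergence.
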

		
Based on this lemma, we can  follow standard duality arguments to   derive      $L^2-$ estimates of   the errors $y-y^h$ and $p-p^h$.

 		\begin{lem} \label{lem3.2} Under the same conditions of Lemma \ref{lem3.1},   	the following estimates hold: 
			\begin{align}
 				\| y-y^h\|_0 & \lesssim h^2(\|u\|_0+\|f\|_0),
				\label{l2erry} \\ 
				\| p-p^h\|_0 & \lesssim h^2\left( \|u\|_0+\|f\|_0+\|y_d\|_0\right).  \label{l2errp}
		\end{align}
		\end{lem}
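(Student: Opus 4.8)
The plan is to apply the Aubin--Nitsche duality argument to each of the two errors $y-y^h$ and $p-p^h$, exploiting the symmetry of $a(\cdot,\cdot)$ together with the $H^1$ rates already established in Lemma \ref{lem3.1}. The decisive observation is that the auxiliary dual problem is again a Poisson problem on the same non-convex domain, so its solution inherits the singular decomposition \eqref{ys}--\eqref{y-ys} and the regularity bound \eqref{leregularity}; consequently the very same XFEM approximation estimate of Lemma \ref{lem3.1} applies to it. This is exactly what lets the duality argument close at the optimal order $O(h^2)$, even though full $H^2$ regularity of the dual solution is unavailable.

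For the state error, set $e_y:=y-y^h$ and introduce the dual solution $w\in H_0^1(\Omega)$ defined by
\begin{equation*}
a(w,v)=(e_y,v),\quad \forall v\in H_0^1(\Omega).
\end{equation*}
Since the right-hand side $e_y$ lies in $L^2(\Omega)$, the regularity \eqref{leregularity} (with $u+f$ replaced by $e_y$) provides a singular part $w_s$ with $K_w+\|w-w_s\|_2\lesssim\|e_y\|_0$. Subtracting \eqref{eqdstate} from \eqref{stat3} and using $V_h\subset H_0^1(\Omega)$ gives the Galerkin orthogonality $a(e_y,v_h)=0$ for all $v_h\in V_h$. Taking $v=e_y$ in the dual problem and using the symmetry of $a(\cdot,\cdot)$ together with this orthogonality, for any $w_h\in V_h$ one has
\begin{equation*}
\|e_y\|_0^2=a(w,e_y)=a(e_y,w-w_h)\leq \|e_y\|_1\,\|w-w_h\|_1.
\end{equation*}
Choosing $w_h$ to be the XFEM solution of the dual problem and applying Lemma \ref{lem3.1} to it yields $\|w-w_h\|_1\lesssim h\|e_y\|_0$ (with the obvious replacement of the approximation estimate when $V_h=V_h^2$). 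Combining this with the primal bound $\|e_y\|_1\lesssim h(\|u\|_0+\|f\|_0)$ from \eqref{h1erry} and dividing by $\|e_y\|_0$ produces \eqref{l2erry}.

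The co-state estimate \eqref{l2errp} follows identically: set $e_p:=p-p^h$, let $z\in H_0^1(\Omega)$ solve $a(z,v)=(e_p,v)$ for all $v\in H_0^1(\Omega)$, obtain the orthogonality $a(v_h,e_p)=0$ by subtracting \eqref{eqdcostate} from \eqref{co-stat3}, and repeat the three lines above, now invoking \eqref{h1errp} for the primal factor and Lemma \ref{lem3.1} on the dual problem for the factor $\|z-z_h\|_1\lesssim h\|e_p\|_0$. The only point requiring real care --- and the step I regard as the genuine obstacle rather than routine bookkeeping --- is justifying that Lemma \ref{lem3.1} is legitimately applicable to the dual problem: one must check that the dual right-hand side ($e_y$ or $e_p$) produces precisely the singular structure assumed in \eqref{leregularity} and \eqref{regularity-p}, so that the enriched space $V_h$ captures the dual singularity at rate $O(h)$. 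Once this is in place, everything else is the standard duality manipulation.
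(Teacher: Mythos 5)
Your proof is correct and follows essentially the same route as the paper: an Aubin--Nitsche duality argument in which the dual problem's solution is given the same singular decomposition and regularity bound as \eqref{leregularity}, so that the $H^1$ estimate of Lemma \ref{lem3.1} applies to the dual XFEM approximation and yields the extra factor of $h$. The point you flag as the genuine obstacle --- that the dual right-hand side produces the singular structure assumed in \eqref{leregularity} --- is exactly the step the paper also relies on (it simply asserts the bound $K_z+\|z-z_s\|_2\lesssim\|y-y^h\|_0$), so your treatment matches the paper's.
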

 	\begin{proof} We only show \eqref{l2erry} for $V_h=V_h^1,$ since the other cases follow similarly.  
	
	Consider the auxiliary problem
 		\begin{equation}
 		\left\{
 		\begin{array}{rlll}
 		 -\Delta z&=&y-y^h & \text{ in }\Omega, \\
 		 z&=&0 & \text{ on }\partial\Omega, \\
 		\end{array}
 		\right.
 		\label{eqauxiliary}
 		\end{equation}
 		which indicates
 		$$a(z,v)=(y-y^h,v), \quad \forall \ v \in H_0^1(\Omega).$$
 		Let $z^h\in V^1_h$ satisfy 
				$$a(z^h,v_h)=(y-y^h,v_h), \quad \forall \ v_h \in V_h^1,$$
and $z_s:= \chi(r) K_zS_\beta(r,\theta)$ be the regular part of $z$ with 
 \begin{align*}	 	
 	K_z+\|z- z_s\|_2\lesssim \|y-y^h\|_0. 
\end{align*}
 Then, similar to \eqref{h1erry}, it holds
 $$\| z-z^h\|_1  \lesssim h\|z-z_s\|_2\lesssim h \|y-y^h\|_0.$$
   As a result, 
 by the Galerkin orthogonality $a(z^h,y-y^h)=0$ and \eqref{h1erry}  we have 
 		\begin{align*}
 		\|y-y^h\|_0^2 &=a(z-z^h,y-y^h)\\
		&\leq \|z-z^h\|_1 \|y-y^h\|_1 \\
 		& \lesssim h^2 \|y-y^h\|_0( \|u\|_0+\|f\|_0),
 		\end{align*}	
 which yields \eqref{l2erry}.	
 	\end{proof}

 	
 	\section{Discrete optimal control problem}
 	\subsection{Discrete optimality conditions}
 	In this subsection, we follow the variational discretization concept $\cite{Hinze05var}$ to discretize the optimal control problem \eqref{eqobjective}-\eqref{eqcontrol1}. 
	The corresponding discrete optimal control problem is of the form  
 	\begin{equation} \label{eqdobjective}
 	\min\limits_{(y_h,u) \in V_h \times U_{ad}}J_h(y_h,u)=\frac{1}{2} \int_{\Omega} (y_h-y_d)^2 ds+\frac{\alpha}{2}\int_{\Omega} u^2 ds,
 	\end{equation} 
 	where $y_h = y_h(u)$ satisfies
 	\begin{equation}\label{eqdstate2}
 	a(y_h,v_h)=(u+f,v_h), \quad \forall v_h \in V_h.
 	\end{equation}
 Similar to the continuous case,  we have  the following existence and uniqueness result and optimality conditions.
 		\begin{lem}
 		The discrete optimal control problem  \eqref{eqdobjective}-\eqref{eqdstate2} admits a unique solution $(y_h,u_h)\in V_h\times U_{ad}$, and  its equivalent optimality conditions read:  find $(y_h,p_h,u_h)\in V_h\times V_h\times U_{ad}$ such that 
 		\begin{align}
 		& a(y_h,v_h)=(u_h+f,v_h), \quad \forall v_h\in V_h,\label{eqdstate3}\\
 		& a(v_h,p_h)=(y_h-y_d,v_h),\quad \forall v_h\in V_h,\label{eqdcostate2}\\
 		& (p_h+\alpha u_h,v-u_h) \geq 0,\quad \forall v\in U_{ad}.\label{eqdprojection}
 		\end{align}
 	\end{lem}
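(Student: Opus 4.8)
The plan is to reproduce, at the discrete level, the optimality argument that yields the continuous result in Lemma \ref{leoptimalcondition}. First I would note that for any fixed $u\in L^2(\Omega)$ the discrete state equation \eqref{eqdstate2} is uniquely solvable: since $V_h\subset H_0^1(\Omega)$ by \eqref{subset}, the bilinear form $a(\cdot,\cdot)$ is bounded and, via the Poincar\'e inequality, coercive on the finite-dimensional space $V_h$, so the Lax--Milgram theorem applies. This defines an affine-linear solution operator $u\mapsto y_h(u)$, and I would introduce the reduced cost functional
\begin{equation*}
\hat{J}_h(u):=\frac{1}{2}\|y_h(u)-y_d\|_0^2+\frac{\alpha}{2}\|u\|_0^2 .
\end{equation*}

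Next I would establish existence and uniqueness of a minimizer over $U_{ad}$. The set $U_{ad}$ in \eqref{Uad} is nonempty (as $u_0\le u_1$), convex and closed in $L^2(\Omega)$. Because $y_h(\cdot)$ is affine, the first term of $\hat{J}_h$ is convex, while the term $\frac{\alpha}{2}\|u\|_0^2$ with $\alpha>0$ renders $\hat{J}_h$ strictly convex and coercive on $L^2(\Omega)$. The standard theory for minimizing a strictly convex, coercive, lower semicontinuous functional over a closed convex set then gives a unique minimizer $u_h\in U_{ad}$, with associated state $y_h=y_h(u_h)\in V_h$; this provides the asserted solution pair $(y_h,u_h)$.

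To obtain the optimality system I would use the first-order condition $\hat{J}_h'(u_h)(v-u_h)\ge 0$ for all $v\in U_{ad}$ and eliminate the state sensitivity through the adjoint. Let $z_h:=y_h'(u_h)(v-u_h)\in V_h$ be the sensitivity in direction $v-u_h$, which by differentiating \eqref{eqdstate2} satisfies $a(z_h,w_h)=(v-u_h,w_h)$ for all $w_h\in V_h$, and let $p_h\in V_h$ solve \eqref{eqdcostate2}. Testing \eqref{eqdcostate2} with $v_h=z_h$ gives $a(z_h,p_h)=(y_h-y_d,z_h)$, while testing the sensitivity equation with $w_h=p_h$ gives $a(z_h,p_h)=(v-u_h,p_h)$; comparing the two yields $(y_h-y_d,z_h)=(p_h,v-u_h)$. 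Hence
\begin{equation*}
\hat{J}_h'(u_h)(v-u_h)=(y_h-y_d,z_h)+\alpha(u_h,v-u_h)=(p_h+\alpha u_h,v-u_h),
\end{equation*}
so the first-order condition is exactly \eqref{eqdprojection}, while \eqref{eqdstate3} and \eqref{eqdcostate2} are just the discrete state and co-state equations for the optimal pair.

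Finally I would observe that the argument is reversible: any triple solving \eqref{eqdstate3}--\eqref{eqdprojection} satisfies $\hat{J}_h'(u_h)(v-u_h)\ge 0$, which for the strictly convex $\hat{J}_h$ is equivalent to $u_h$ being the global minimizer, establishing the equivalence of the optimality conditions with the minimization problem. I expect no genuine obstacle here, the problem being finite-dimensional and entirely parallel to the continuous theory; the only delicate point is the adjoint manipulation, where the deliberate ordering of the arguments of $a(\cdot,\cdot)$ in the co-state equation \eqref{eqdcostate2} lets the two test-function identities combine directly into the reduced gradient $p_h+\alpha u_h$ without invoking any further symmetry of the form.
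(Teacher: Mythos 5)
Your argument is correct and is precisely the standard optimality technique (reduced functional, strict convexity and coercivity over the closed convex set $U_{ad}$, adjoint representation of the reduced gradient) that the paper invokes without proof by analogy with the continuous case in Lemma \ref{leoptimalcondition} and the reference to Tr\"oltzsch. You have simply written out the details the authors omit, and the adjoint manipulation matches the ordering of arguments in \eqref{eqdcostate2} exactly as intended.
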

 \begin{rem}
We note that the  optimal control $u$  is not directly discretized in the objective functional \eqref{eqdobjective}, as $U_{ad}$  is infinite dimensional.  In fact, the variational inequality \eqref{eqdprojection} means that the discrete control $u_h$ is   the $L^2-$ projection of $-\frac{p_h}{\alpha}$ onto $U_{ad}$, i.e. 
 \begin{equation}\label{eqprojection3}
 u_h=P_{U_{ad}} \left(-\frac{p_h}{\alpha} \right).
 \end{equation}
This  is a key point of   the variational discretization concept. 
 In particular, if  the functions $u_0$ and $u_1$ are well-defined at any  $x\in \Omega$, then \eqref{eqprojection3} is equivalent to
 \begin{equation}\label{eqprojection4}
 u_h=min\left \{u_1,max \left \{u_0, -\frac{p_h}{\alpha} \right \} \right\}.
 \end{equation}
 \end{rem}

 	\subsection{Error estimates}
	
	Recall that $y^h\in V_h$ and $p^h\in V_h$ are the solutions to the XFEM formulations \eqref{eqdstate} and \eqref{eqdcostate}, respectively. 
	In what follows we first  show that the errors between $(y,p,u)$ and $(y_h,p_h,u_h)$, which are the solutions of the continuous  optimal control problem  \eqref{eqstate}-\eqref{eqprojection} and the discrete optimal control problem \eqref{eqdstate2}-\eqref{eqdprojection} respectively,  are bounded from above by the errors  between $(y,p)$ and $(y^h,p^h)$.
 	
 	\begin{thm}\label{th1}
 		Let $(y,p,u)\in H_0^1(\Omega)\times H_0^1(\Omega)\times U_{ad}$ and $(y_h,p_h,u_h)\in V_h\times V_h\times U_{ad}$ be the solutions to the continuous  problem  \eqref{eqstate}-\eqref{eqprojection} and the discrete  problem \eqref{eqdstate2}-\eqref{eqdprojection}, respectively. 
		Then we have 
 		\begin{eqnarray}
 		\alpha^{\frac12} \|u-u_h\|_0+\|y-y_h\|_0 &\lesssim& \|y-y^h\|_0+{\alpha^{-\frac12}}\|p-p^h\|_0, \label{t1} \\
 		\|p-p_h\|_{0} &\lesssim& \|p-p^h\|_{0} + \|y-y_h\|_{0}, \label{t2} \\
 		|y-y_h|_1 &\lesssim& |y-y^h|_1+\|u-u_h\|_{0},  \label{t3} \\
 		|p-p_h|_1&\lesssim& |p-p^h|_1+\|y-y_h\|_{0}.\label{t4}
 		\end{eqnarray}
 	\end{thm}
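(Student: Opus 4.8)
The plan is to use the auxiliary discrete solutions $y^h$ and $p^h$ as intermediaries, so that all four estimates reduce to a triangle inequality plus control of the discrete differences $y^h-y_h$ and $p^h-p_h$. The starting observation is that, by subtracting the defining relations \eqref{eqdstate} and \eqref{eqdstate3}, the difference $y^h-y_h$ is the discrete state driven by $u-u_h$, i.e. $a(y^h-y_h,v_h)=(u-u_h,v_h)$ for all $v_h\in V_h$; likewise, subtracting \eqref{eqdcostate} and \eqref{eqdcostate2} shows $p^h-p_h$ is the discrete co-state driven by $y-y_h$, i.e. $a(v_h,p^h-p_h)=(y-y_h,v_h)$ for all $v_h\in V_h$. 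Testing each of these identities with the other's solution, and using the symmetry of $a$, yields the crucial duality identity $(p^h-p_h,u-u_h)=(y-y_h,y^h-y_h)$, which is the technical heart of the argument.

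For \eqref{t1} I would first add the continuous variational inequality \eqref{eqprojection} tested with $v=u_h$ to the discrete one \eqref{eqdprojection} tested with $v=u$, which gives $\alpha\|u-u_h\|_0^2\le -(p-p_h,u-u_h)$. Splitting $p-p_h=(p-p^h)+(p^h-p_h)$ and inserting the duality identity together with the decomposition $y^h-y_h=(y^h-y)+(y-y_h)$ turns the right-hand side into $-(p-p^h,u-u_h)-(y-y_h,y^h-y)-\|y-y_h\|_0^2$. Moving the last term to the left produces $\alpha\|u-u_h\|_0^2+\|y-y_h\|_0^2$ on the left, and then Cauchy--Schwarz followed by Young's inequality, with weights chosen to absorb $\tfrac{\alpha}{2}\|u-u_h\|_0^2$ and $\tfrac12\|y-y_h\|_0^2$, leaves $\tfrac1\alpha\|p-p^h\|_0^2+\|y-y^h\|_0^2$ on the right; taking square roots gives \eqref{t1}.

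The remaining three estimates then follow by the triangle inequality once the discrete differences are bounded. Coercivity of $a$ on $H_0^1(\Omega)$, the inclusion \eqref{subset}, and the Poincar\'e inequality give $|y^h-y_h|_1\lesssim\|u-u_h\|_0$ (test the state-difference equation with $y^h-y_h$) and $\|p^h-p_h\|_1\lesssim\|y-y_h\|_0$ (test the co-state-difference equation with $p^h-p_h$). Then $|y-y_h|_1\le|y-y^h|_1+|y^h-y_h|_1$ yields \eqref{t3}; $|p-p_h|_1\le|p-p^h|_1+|p^h-p_h|_1$ yields \eqref{t4}; and $\|p-p_h\|_0\le\|p-p^h\|_0+\|p^h-p_h\|_0\le\|p-p^h\|_0+\|p^h-p_h\|_1$ yields \eqref{t2}.

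I expect the main obstacle to be the bookkeeping in \eqref{t1}: one must combine the two one-sided variational inequalities in the correct orientation, route the cross term $(p^h-p_h,u-u_h)$ through the duality identity so that a term $-\|y-y_h\|_0^2$ is generated, and then balance the Young's inequality constants so that both $\alpha^{1/2}\|u-u_h\|_0$ and $\|y-y_h\|_0$ survive on the left-hand side. By contrast, the estimates \eqref{t2}--\eqref{t4} are essentially immediate consequences of the discrete stability bounds above.
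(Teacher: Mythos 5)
Your proposal is correct and follows essentially the same route as the paper's proof: the same duality identity $(u_h-u,p_h-p^h)=(y_h-y,y_h-y^h)$ obtained by cross-testing the two discrete difference equations, the same combination of the two variational inequalities with Young's inequality to absorb $\tfrac{\alpha}{2}\|u-u_h\|_0^2$ and $\tfrac12\|y-y_h\|_0^2$ for \eqref{t1}, and the same coercivity-plus-Poincar\'e stability bounds on $y^h-y_h$ and $p^h-p_h$ followed by the triangle inequality for \eqref{t2}--\eqref{t4}.
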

 	\begin{proof}
 		We first show \eqref{t1}. From \eqref{eqdstate}-\eqref{eqdcostate} and  \eqref{eqdstate2}-\eqref{eqdcostate2} it follows
 		\begin{align}
 		a(y_h-y^h,v_h)&=(u_h-u,v_h), \quad \forall v_h \in V_h, \label{eqt1} \\
 		a(v_h,p_h-p^h)&=(y_h-y,v_h), \quad \forall v_h \in V_h, \label{eqt2}
 		\end{align}
 		which yields 
 		\begin{align}
 		(y_h-y,y_h-y^h)&=a(y_h-y^h,p_h-p^h)=(u_h-u,p_h-p^h). \label{eqt3} 
 		\end{align}
 		By \eqref{eqprojection} and   \eqref{eqdprojection} we get 
 		\begin{align*}
 		(\alpha u+p,u_h-u) \geq 0, \quad 
 		(\alpha u_h+p_h,u-u_h) \geq 0,
 		\end{align*}
 		which imply   $$(\alpha (u-u_h)+p-p_h,u_h-u) \geq 0.$$
		This inequality, together with \eqref{eqt3}, indicates
 		\begin{align*}
 		\alpha \|u-u_h\|^2_0 & \leq (u_h-u,p-p_h) \\
 		&=(u_h-u,p-p^h)+(u_h-u,p^h-p_h) 	\\
 		&=(u_h-u,p-p^h)+(y_h-y,y^h-y_h)  \\
 		&\leq \frac{1}{2}\left(\alpha\|u-u_h\|^2_0+\frac{1}{\alpha}\|p-p^h\|^2_0\right)-(y-y_h,y-y_h) +(y_h-y,y^h-y)\\	
 		&\leq \frac{1}{2}\left(\alpha \|u-u_h\|^2_0+\frac{1}{\alpha}\|p-p^h\|^2_0\right) -\frac{1}{2}\|y-y_h\|_0^2+\frac{1}{2}\|y-y^h\|_0^2,
 		\end{align*}
 		which implies \eqref{t1}. 
 		
 		Secondly, let us prove \eqref{t2}.  Since $p_h-p^h\in V_h\subset H_0^1(\Omega)$ (cf. \eqref{subset}),  by \eqref{eqt2} we have
 		\begin{align*}
 		|| p_h-p^h||^2_{0}&\lesssim |p_h-p^h|_1^2\\
 		&= a(p_h-p^h,p_h-p^h)=(y_h-y, p_h-p^h)\\
 		&\lesssim ||y_h-y||_{0}|| p_h-p^h||_{0},
 		\end{align*}
 		which,  together with the triangle inequality,  leads to
 		\begin{align*}
 		\|p-p_h\|_0 & \leq \|p-p^h\|_0+\|p^h-p_h\|_0 \\
 		& \lesssim \|p-p^h\|_0+\|y_h-y\|_0,
 		\end{align*}
 		i.e.  \eqref{t2} holds.
 		
 		Thirdly, let us derive \eqref{t3}.  In view of  \eqref{eqt1}, we obtain
 		\begin{align*}
 		|y_h-y^h|_1^2 &= a(y_h-y^h,y_h-y^h)= (u_h-u,y_h-y^h) \\
 		&\leq \|u-u_h\|_{0} \|y_h-y^h\|_0 \\
 		&\lesssim \|u-u_h\|_{0}|y_h-y^h|_1,
 		\end{align*}
 		which, together with the triangle inequality, indicates \eqref{t3}.
 		
 		Finally, let us show \eqref{t4}.From  \eqref{eqt2} we get
 		\begin{align*}
 		|p_h-p^h|_1^2&= a(p_h-p^h,p_h-p^h)=(y_h-y, p_h-p^h)\\
 		&\lesssim||y_h-y||_{0}|| p_h-p^h||_{0}\\
 		&\lesssim ||y_h-y||_{0}| p_h-p^h|_1,
 		\end{align*}
 	which, together with the triangle inequality, yields  \eqref{t4}.
 	\end{proof}
 	
 	Based on Theorem \ref{th1} and Lemmas \ref{lem3.1}-\ref{lem3.2}, we immediately have the following optimal error estimates.
 	\begin{thm}\label{th2}
 		Let $(y,p,u)\in H_0^1(\Omega)\times H_0^1(\Omega) \times U_{ad}$ and $(y_h,p_h,u_h)\in V_h\times V_h\times U_{ad}$ be  the solutions to the continuous problem (\ref{eqstate})-(\ref{eqprojection}) and the discrete problem (\ref{eqdstate2})-(\ref{eqdprojection}) respectively  such that  the regularity conditions \eqref {leregularity} and \eqref{regularity-p} hold. Then we have 
 		\begin{eqnarray}
 		\|u-u_h\|_{0}+\|y-y_h\|_{0} +\|p-p_h\|_{0} &\lesssim& h^2\left(\|u\|_{0}+\|f\|_0+\|y_d\|_0\right),\label{u0}\\
 		|y-y_h|_1 + |p-p_h|_1&\lesssim& h \left(\|u\|_{0}+\|f\|_0+\|y_d\|_0\right). \label{y00}
 		\end{eqnarray}
 	\end{thm}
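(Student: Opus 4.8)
The plan is to combine the a~posteriori-type bounds of \cref{th1}, which control the optimal-control errors $u-u_h$, $y-y_h$ and $p-p_h$ in terms of the pure state/co-state discretization errors $y-y^h$ and $p-p^h$, with the $L^2$- and $H^1$-estimates for the latter supplied by \cref{lem3.1,lem3.2}. Since all of these are already established, the argument is essentially a substitution; the one thing that requires care is the \emph{order} in which \eqref{t1}--\eqref{t4} are invoked, because \eqref{t2} and \eqref{t4} feed on $\|y-y_h\|_0$ while \eqref{t3} feeds on $\|u-u_h\|_0$, so these quantities must be bounded first. The dependence is acyclic once processed as \eqref{t1}$\to$\eqref{t2}, and then \eqref{t3}, \eqref{t4}.

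First I would establish the $L^2$-estimate \eqref{u0}. Inserting the bounds $\|y-y^h\|_0\lesssim h^2(\|u\|_0+\|f\|_0)$ and $\|p-p^h\|_0\lesssim h^2(\|u\|_0+\|f\|_0+\|y_d\|_0)$ from \eqref{l2erry}--\eqref{l2errp} into the right-hand side of \eqref{t1} immediately yields
$$\alpha^{\frac12}\|u-u_h\|_0+\|y-y_h\|_0\lesssim h^2\left(\|u\|_0+\|f\|_0+\|y_d\|_0\right),$$
which separately controls $\|u-u_h\|_0$ and $\|y-y_h\|_0$ at order $h^2$, the fixed factors $\alpha^{\pm 1/2}$ being absorbed into the generic constant. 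Feeding the resulting bound on $\|y-y_h\|_0$, together with \eqref{l2errp}, into \eqref{t2} gives $\|p-p_h\|_0\lesssim h^2(\|u\|_0+\|f\|_0+\|y_d\|_0)$; adding the three contributions proves \eqref{u0}.

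Next I would derive the $H^1$-estimate \eqref{y00}. Since $|y-y^h|_1\le\|y-y^h\|_1\lesssim h(\|u\|_0+\|f\|_0)$ and $|p-p^h|_1\le\|p-p^h\|_1\lesssim h(\|u\|_0+\|f\|_0+\|y_d\|_0)$ by \eqref{h1erry}--\eqref{h1errp}, I would substitute into \eqref{t3} the $O(h)$ bound on $|y-y^h|_1$ together with the already-obtained $O(h^2)$ bound on $\|u-u_h\|_0$, getting $|y-y_h|_1\lesssim h(\|u\|_0+\|f\|_0+\|y_d\|_0)$. Likewise \eqref{t4} combines the $O(h)$ bound on $|p-p^h|_1$ with the $O(h^2)$ bound on $\|y-y_h\|_0$ to give $|p-p_h|_1\lesssim h(\|u\|_0+\|f\|_0+\|y_d\|_0)$, and summing yields \eqref{y00}.

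The step I expect to be least automatic, though still routine, is the bookkeeping of the $\alpha$-dependence: \cref{th1} carries explicit $\alpha^{\pm 1/2}$ weights, so one must confirm that these are harmless under the stated convention that the constant hidden in $\lesssim$ may depend on the fixed positive parameter $\alpha$ (but not on $u$ or $h$). There is no genuine analytic difficulty here, as all the singular-function decompositions and duality arguments have been discharged in \cref{lem3.1,lem3.2}; the theorem is a clean corollary of the chain of inequalities in \cref{th1}, which is exactly why it follows ``immediately.''
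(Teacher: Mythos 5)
Your proposal is correct and matches the paper's argument exactly: the paper gives no separate proof, stating only that the theorem follows immediately from Theorem \ref{th1} combined with Lemmas \ref{lem3.1} and \ref{lem3.2}, which is precisely the substitution you carry out (in the right order, feeding \eqref{t1} into \eqref{t2}, \eqref{t3}, \eqref{t4}). Nothing further is needed.
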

 
 \section{Iteration algorithm}

 	 	Notice that  the optimal control problem   \eqref{eqobjective}-\eqref{eqstrongstate} without the constraint \eqref{eqcontrol1} is a linear problem, and the resultant discrete linear system is easy to solve. However,  for    the constrained    optimal control problem   \eqref{eqobjective}-\eqref{eqcontrol1}, 
			the corresponding discrete optimal control problem  \eqref{eqdobjective}-\eqref{eqdstate2} or its equivalent optimality problem \eqref{eqdstate3}-\eqref{eqdprojection} is a nonlinear system,   and we shall apply the 
		semi-smooth Newton algorithm $\cite{Hinze2009Variational}$ to solve it. 	
To describe this iteration algorithm, we first show the matrix form of the discrete system  \eqref{eqdstate3}-\eqref{eqdprojection}. 

Let $\{ \varphi_i: i=1,2,\cdots, I\}$ be  a set of basis functions of the XFE space $V_h$ with $ I=dim( V_h)$,	and $Y_h,P_h$ be column vectors consisting of corresponding degrees of freedom of $y_h,p_h$ respectively, such that 
$$y_h=( \varphi_1,  \varphi_2,\cdots, \varphi_{ I})Y_h,\quad p_h=(  \varphi_1,  \varphi_2,\cdots, \varphi_{ I})P_h.$$
Define matrices $A, M\in \Re^{ I\times  I}$ and vectors $F_1, F_2\in \Re^{ I}$ by 
 	 		  \begin{align*}
 	 		 	&A(i,j)=a(\varphi_i,\varphi_j), \quad  M(i,j)=(\varphi_i,\varphi_j),\\
				&F_1(j)=(f,\varphi_j),   \quad F_2(j)=(y_d,\varphi_j) 
 	 		 \end{align*}
  	for $ i,j =1,2,\cdots,  I$. Then  \eqref{eqdstate3} and \eqref{eqdcostate2}  are equivalent to the following matrix equations: 
  		 \begin{align} 
  		 	AY_h=MU_h+F_1,\label{matrx-equ10} \\
  		 	AP_h=MY_h-F_2. \label{matrx-equ2}
  		 \end{align}
  
  In view of \eqref{eqprojection4}, i.e. $u_h=\min \left \{u_1,\max \left \{u_0,-\frac{p_h}{\alpha} \right \} \right \}$, we   define the    in-active set $\Omega_I$ and active set $\Omega_A$ as follows:
  $$\Omega_{I}:=\left \{x\in\Omega: u_0<-\frac{p_h}{\alpha} <u_1 \right \},$$
  $$ \Omega_{A}= \Omega_{A0}\cup  \Omega_{A1}, \quad \Omega_{A0}:=\left \{x\in\Omega: -\frac{p_h }{\alpha} \leq u_0 \right \}, \ \Omega_{A1} :=\left \{x\in\Omega: -\frac{p_h}{\alpha} \geq u_1 \right \}.$$ 
  It is evident that 
\begin{equation}\label{uh-ph}
 u_h|_{\Omega_{I}}=-\frac{p_h}{\alpha},\quad u_h|_{\Omega_{A0}}=  u_0,\quad u_h|_{\Omega_{A1}}=  u_1.
 \end{equation}
Note that in the algorithm to be given  $u_0, u_1$ will be replaced by their approximations. 
 
Let $V_h^*$ be the modified finite element space of $V_h$, 
	 $$V_h^{1,*}:=\left\{
\begin{array}{ll}\small  
W_1 & \text{if}\  \frac12<\beta\leq1,\\\\
W^*_1& \text{if}\  \beta=\frac12,
\end{array}
\right. $$ 
$$V_h^{2,*}:=\left\{
\begin{array}{ll}\small 
W_2  & \text{if}\  \frac12<\beta\leq1,\\\\
W^*_2	& \text{if}\  \beta=\frac12.
\end{array}
\right. $$  
 $u_0^*,u_1^*\in V_h^*$ are $L^2-$ projections of $u_0, u_1$ onto $V_h^*$, respectively. Let  $u_h^*$ be an approximation of $u_h$ with
\begin{equation}\label{uh-ph*} 
u_h^*|_{\Omega_{I}}=u_h, \quad u_h^*|_{\Omega_{A0}}=  u_0^*,\quad u_h^*|_{\Omega_{A1}}=  u_1^*.
 \end{equation}
It is obvious that $u_h^* \in \phi V_h^*+(1-\phi) V_h^*,$  where 
  $\phi$ is  the characteristic function of $\Omega_{I}$.  
  Let $U_{h,1},U_{h,2}\in \Re^{I}$ denote the  column vectors consisting of corresponding degrees of freedom  of $\phi u_h^*$ and $(1-\phi) u_h^*$, 
  respectively, and 
 define matrices $M_1, M_2\in \Re^{ I\times I}$ by
$$M_1(i,j):=(\varphi_i,\phi  \varphi_j), \quad M_2(i,j):=(\varphi_i,(1-\phi )\varphi_j)$$
for $ i  =1,2,\cdots, I$ and $ j =1,2,\cdots,  I$.
Then 
the 
 matrix form \eqref{matrx-equ10} is modified  as 
\begin{equation}\label{modi}
AY_h= M_1 U_{h,1} +M_2  U_{h,2} +F_1.
\end{equation}

Based on \eqref{matrx-equ10}-\eqref{modi}, we can describe the semi-smooth newton algorithm  as follows.

	 		  	 \paragraph{Semi-smooth newton algorithm \\} 
			 Set $k=0,  \phi^{(0)}=0, U_{h,1}^{(0)}=0, U_{h,2}^{(0)}=0$;\\ 
 	 		
			 \noindent \textbf{Do until convergence} 
 	 		 \begin{enumerate}
 	 		 	\item Compute $y^{(k+1)}_h\in V^h$ by $$a(y_h^{(k+1)},v_h)=\left(\phi^{(k)}u_{h}^{(k)}+(1-\phi^{(k)})u_{h}^{(k)}+f,v_h\right),\forall v_h\in V^h;$$
				or, equivalently, compute $$Y_h^{(k+1)}=A^{-1}(M_1^{(k)} U_{h,1}^{(k)}+M_2^{(k)} U_{h,2}^{(k)}+F_1);$$
				
 	 		 	\item Compute $p_h^{(k+1)}\in V^h$ by $$a(v_h,p_h^{(k+1)})=(y_h^{(k+1)}-y_d,v_h),\forall v_h\in V^h;$$ or, equivalently, compute $$P_h^{(k+1)}=A^{-1}(MY^{(k+1)} -F_2)=A^{-1}\left(MA^{-1}\left(M_1^{(k)} U_{h,1}^{(k)}+M_2^{(k)} U_{h,2}^{(k)}+F_1\right) -F_2\right);$$
				
				\item Compute $$ \Omega_{I}^{(k+1)}:=\left \{x\in\Omega: u_0<-\frac{p_h^{(k+1)}}{\alpha} <u_1 \right \},$$
				$$ \Omega_{A0}^{(k+1)}:=\left \{x\in\Omega: -\frac{p_h^{(k+1)}}{\alpha} \leq u_0 \right \}, \quad\Omega_{A1}^{(k+1)}:=\left \{x\in\Omega: -\frac{p_h^{(k+1)}}{\alpha} \geq u_1 \right \},$$ 
	and the characteristic function, $\phi^{(k+1)}$, of $\Omega_{I}^{(k+1)}$;
 	 		 	\label{computeyhi}
 	 		 	\item Compute $u_{h,2}^{(k+1)}$ (or  $U_{h,2}^{(k+1)}$) with
				 $$u_{h,2}^{(k+1)}|_{ \Omega_{A0}^{(k+1)}}=u_0^*,\quad 
				 u_{h,2}^{(k+1)}|_{ \Omega_{A1}^{(k+1)}}=u_1^*,\quad u_{h,2}^{(k+1)}|_{ \Omega_{I}^{(k+1)}}=0;
				 $$
  
 \item Compute $U_{h,1}^{k+1}$ by 
  $$U_{h,1}^{k+1}=-\frac1\alpha \left(A^{-1}(MA^{-1}(M_1^{(k+1)} U_{h,1}^{(k+1)}+M_2^{(k+1)} U_{h,2}^{(k+1)}+F_1) -F_2) \right), $$
i.e.				
				$U_{h,1}^{k+1}=-\frac{1}{\alpha}(I+\frac{1}{\alpha}A^{-1}MA^{-1}M_1^{(k+1)})^{-1}( A^{-1}MA^{-1}M_2U_{h,2}^{(k+1)}+A^{-1}MA^{-1}F_1-A^{-1}F_2)$; 
 	 		 	\item $k=k+1;$
 	 		 \end{enumerate}
  		 	 		 \noindent \textbf{end}
					 
					 	It should be pointed out  that in step 3, the  active set can only  be computed approximately for XFEM, even when $u_0,u_1$ are constants, since some basis functions of  the XFE spaces  are non-linear. In actual computation we   just use their piecewise linear interpolations to replace the nonlinear basis functions so as to compute the approximate active set. We refer to  \cite{Sevilla2010Polynomial} for an efficient method to compute the active set for high order finite element methods.  

  		\section{Numerical results}

 	 		In this section, we shall provide several  numerical examples to verify the performance of the  proposed  methods, i.e.  the discrete schemes \eqref{eqdobjective}-\eqref{eqdstate2} or  \eqref{eqdstate3}-\eqref{eqdprojection} with   $V_h=V_h^1$ and $V_h=V_h^2$. We recall, cf. Remark \ref{rem3.1},  that    $V_h^1$ and $V_h^2$  are corresponding to the XFEM with a cut-off function (abbr. cut XFEM)  and   the  classic XFEM with a fixed enrichment area (abbr. classic XFEM).

 	  \begin{example}		
 		{An unconstrained problem in a crack domain.}
	\end{example}	
		 Take $\Omega=[-1,1]\times[-1,1]$ with a segment crack from the  point $(-1,0)$ to  the crack-tip $(0,0)$ (cf. Figure \ref{crack}). We choose $\alpha=0.01$, the enrichment radius $r_s=0.5$ (cf. Remark \ref{rem2.1}),  and the cut-off function $\chi(r)$ in \eqref{chi} is a polynomial with $r_0=0.01$ and $r_1=0.99$.
        Let   \begin{align*}
        y&=\sqrt{r} sin(\frac{\theta}{2})-\frac{1}{4}r^2, \\
        p&=x_2^2(1-x_2^2)(1-x_1^2)+\frac{1}{2}\sqrt{r}sin(\frac{\theta}{2})(1-x_1^2)(1-x_2^2), \\
        u&=-\frac{p}{\alpha}
        \end{align*}
        be the  analytical state, co-state and control of the    optimal control problem \eqref{eqobjective} subject to 
		\begin{equation}\label{eqstrongstaten1}
 		\left\{
 		\begin{array}{rll}
 		& -\Delta y=u+f, & \text{ in }\Omega \\
 		& y=y_b, & \text{ on }\partial\Omega.
 		\end{array}
 		\right.
 		\end{equation}
		Note that in this case $U_{ad}=L^2(\Omega)$, and $y_d$ can be obtained by $-\Delta p =y-y_d$. In particular, the discrete equation \eqref{eqdprojection} yields $ u_h=-\frac{p_h}{\alpha}$, which means $ u-u_h=-\frac{p-p_h}{\alpha}$.
		

 		We use $N\times N$ uniform triangular meshes (cf. Figure \ref{crack}). Tables \ref{xfem2example1err}-\ref{xfem1example1err} show  results of the relative errors between  $(y_h,p_h)$ and $(y,p)$ in   $H^1$ semi-norm and $L^2$ norm, and Figure \ref{xfemexample1err} shows the 	 relative errors   against the mesh size $h=2/N$. We can see that the proposed methods  yield  optimal convergence orders, i.e. first order  rates of convergence for  $|y-y_h|_1$ and  $|p-p_h|_1$, and second order rates of convergence for $|y-y_h|_0$ and  $|p-p_h|_0$. This is consistent with our theoretical results in Theorem \ref{th2}.
		
		 For comparison  we also show in Table \ref{p1example1err} and Figure \ref{p1err} the  	 relative errors of $(y_h,p_h)$ for the standard linear  element method ($P_1$ FEM) on body fitted meshes (cf. Figure \ref{crack}).  We can see that $P_1$ FEM yields only about $0.5$ order convergence rates  for  $|y-y_h|_1$ and first order convergence rates for $|y-y_h|_0$ and  $|p-p_h|_0$. This is conformable to the  theoretical results in \cite{Babu1972A}.
		 
%
		
 		

	\begin{figure}[!hbtp]
 			\centering
 			\begin{minipage}{7cm}
 				\includegraphics[width=7cm]{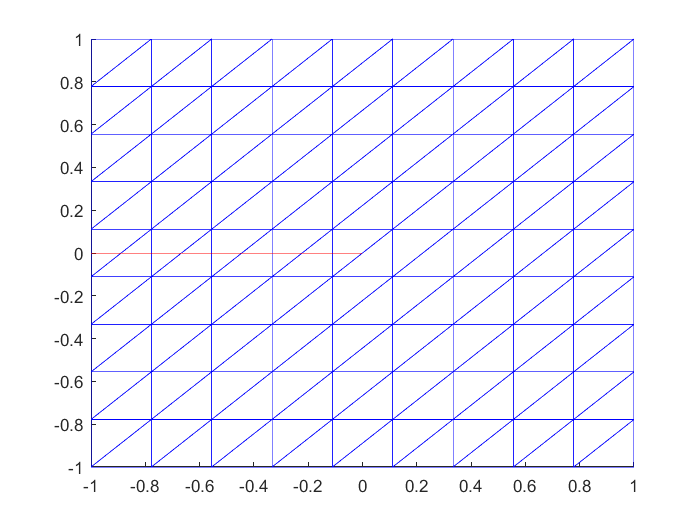}
 			\end{minipage}
 			\begin{minipage}{7cm}
 				\includegraphics[width=7cm]{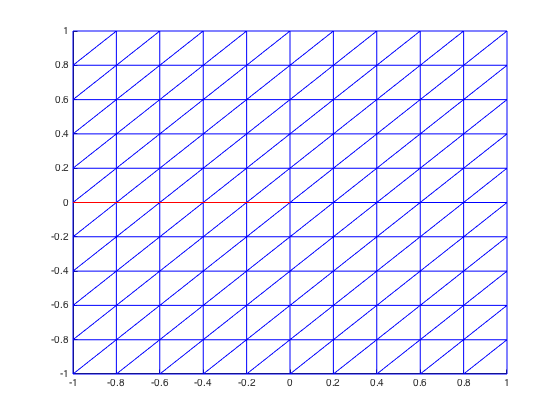}
 			\end{minipage}
 			\caption{ Domain $\Omega$ ( (the red line is the crack) and meshes  for Example 6.1: $9\times 9$ mesh for XFEMs (left) and  $10\times 10$ mesh for $P_1$ FEM.}\label{crack}
	\end{figure}		
			
	\begin{table}[!hbtp] 
 			\centering
 			\caption{Relative errors of  the cut XFEM for Example 6.1.} 		\label{xfem2example1err}	
	\begin{tabular}{|c|c|c|c|c|c|c|c|c|} 
 				\hline
 				N &$\frac{|y-y_h|_1}{|y|_1}$&order&$\frac{\|y-y_h\|_0}{\|y\|_0}$&order&$\frac{|p-p_h|_1}{|p|_1}$&order&$\frac{\|p-p_h\|_0}{\|p\|_0}$&order\\
 				\hline
 				39 & 0.1153 &    &0.0235   &    &   0.1457  &  &   0.0092  &      \\
 				\hline
 				49 & 0.0912& 1.03  & 0.0148 &   2.02 &  0.1162  &  0.99 &    0.0058  &  1.99 \\ 
 				\hline
 				59&   0.0756  &  1.01 &    0.0101   & 2.06 &    0.0966  &  0.99 &    0.0040 &   2.00 \\
 				\hline
 				69&     0.0647 &   0.99 &    0.0073 &   2.10 &    0.0827 &   0.99 &    0.0029 &   2.01 \\
 				\hline
 				79& 0.0566   & 0.99 &    0.0054 &   2.14 &    0.0723  &  0.99 &   0.0022  &  2.02 \\
 				\hline
 			\end{tabular}
 		 		\end{table}  
		
 		\begin{table}[!hbtp]

 			\centering
 			\caption{Relative errors of the classic XFEM for Example 6.1.}\label{xfem1example1err}

			\begin{tabular}{|c|c|c|c|c|c|c|c|c|}
 				\hline
 				N &$\frac{|y-y_h|_1}{|y|_1}$&order&$\frac{\|y-y_h\|_0}{\|y\|_0}$&order&$\frac{|p-p_h|_1}{|p|_1}$&order&$\frac{\|p-p_h\|_0}{\|p\|_0}$&order\\
 				\hline
 				39 & 0.0632  &  &  0.0192  &   &   0.1380  &   &  0.0084  &  \\
 				\hline
 				49 & 0.0491  & 1.11 & 0.0124   & 1.91  &  0.1103  &  0.98   & 0.0053  &  1.98 \\ 
 				\hline
 				59&   0.0403  &  1.06 &  0.0087 &   1.93  & 0.0919 &   0.99 &   0.0037   & 1.98 \\
 				\hline
 				69&    0.0344   & 1.02  & 0.0064  & 1.94  &  0.0787  &  0.99  &  0.0027 &   1.99 \\
 				\hline
 				79& 0.0288   & 1.29 &  0.0048   & 2.11  &  0.0685   & 1.02   & 0.0021 &   2.02 \\
 				\hline
 			\end{tabular} 
 		 		\end{table}
 		
 		\begin{figure}[!hbtp]
 			\centering
 			\begin{minipage}{7cm}
 				\includegraphics[width=7cm]{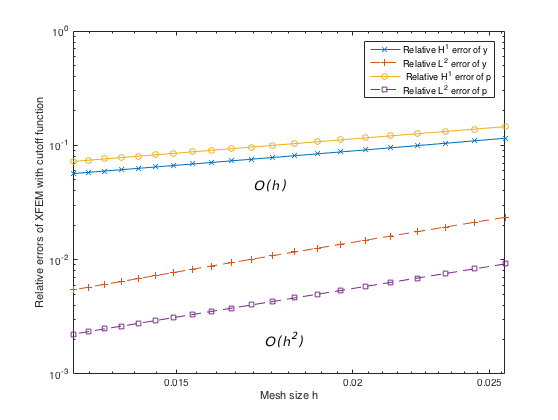}
 			\end{minipage}
 			\begin{minipage}{7cm}
 				\includegraphics[width=7cm]{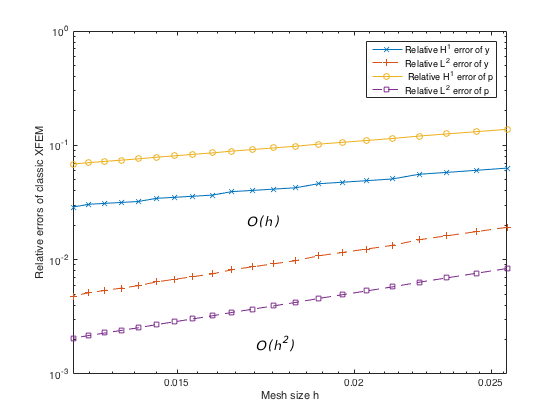}
 			\end{minipage}  \label{xfemexample1err}
 		\caption{Convergence history of the cut XFEM   (left) and  classic XFEM    (right) for Example 6.1.}
 		\end{figure}
 		

 		\begin{table}[!hbtp]
 			\centering
 			\caption{Relative errors of $P_1$ FEM for Example 6.1.} \label{p1example1err}
		\begin{tabular}{|c|c|c|c|c|c|c|c|c|}
 				\hline
 				N &$\frac{|y-y_h|_1}{|y|_1}$&order&$\frac{\|y-y_h\|_0}{\|y\|_0}$&order&$\frac{|p-p_h|_1}{|p|_1}$&order&$\frac{\|p-p_h\|_0}{\|p\|_0}$&order\\
 				\hline
 				40  &  0.2787  &    &  0.0811   &  &    0.1779  &  &   0.0206  &       \\
 				\hline
 				50 &   0.2432  &  0.61 &    0.0627  &  1.16 &    0.1492 &   0.79 &    0.0159 &   1.16
 				\\ 
 				\hline
 				60 & 0.2186   & 0.59 &   0.0511  &  1.13 &  0.1299  &  0.76  &  0.0130 &   1.11 \\
 				\hline
 				70 &  0.2002  &  0.57 &    0.0431 &   1.11 &    0.1158  &  0.74 &    0.0110 &  1.09\\
 				\hline
 				80&  0.1857 &   0.56 &    0.0372  &  1.09 &   0.1051  &  0.72 &   0.0095  &  1.07 \\
 				\hline		
 			\end{tabular}
 	 		\end{table}

 			\begin{figure}[!hbtp] 
 			\centering
 			\begin{minipage}{7cm}
 				\includegraphics[width=7cm]{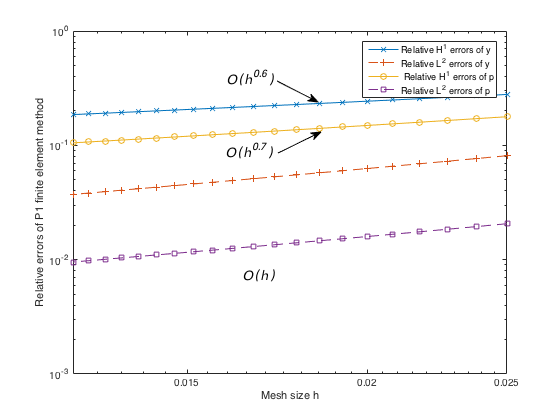}
 			\end{minipage}
 			\begin{minipage}{7cm}
 				\includegraphics[width=7cm]{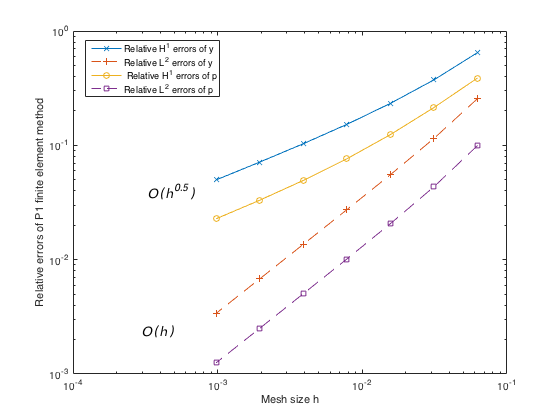}
 			\end{minipage}
 			\caption{Convergence history of $P_1$ FEM for Example 6.1.}
 			\label{p1err}
 		\end{figure}

 	\begin{example} {A constrained problem in a crack domain.} 
	
	\end{example}
	
	Let the domain $\Omega$, the  enrichment radius $r_s$, and the cut-off function $\chi(r)$ be the same as in Example 6.1. 
 		We take $\alpha=1$, $u_0=-\frac15$, $u_1=\frac15$, and  let 	
		 \begin{align*}
 		y&=\sqrt{r} sin(\frac{\theta}{2})-\frac{1}{4}r^2 \\
 		p&=x_2^2(1-x_2^2)(1-x_1^2)+\frac{1}{2}\sqrt{r}sin(\frac{\theta}{2})(1-x_1^2)(1-x_2^2) \\
 		u&=\min \left  \{\frac15,\max \left \{-\frac{p}{\alpha},-\frac15 \right \} \right \} 
 		\end{align*}
 	be the  analytical state, co-state and control of the    optimal control problem \eqref{eqobjective} subject to    \eqref{eqstrongstaten1}.  We use the same meshes as in Example 6.1. 
	
	Tables \ref{xfem2example2err}-\ref{xfem1example2err} show  results of the relative errors between  $(y_h,p_h,u_h)$ and $(y,p,u)$, and Figure \ref{figexam2} shows the 	 relative errors   against the mesh size $h=2/N$. We can see that the proposed methods  yield  optimal convergence orders, i.e. first order  rates of convergence for  $|y-y_h|_1$ and  $|p-p_h|_1$, and second order rates of convergence for $|y-y_h|_0$,  $|p-p_h|_0$, and  $|u-u_h|_0$. This is consistent with the theoretical results in Theorem \ref{th2}.


 		\begin{table}[!hbtp]
 			\centering
 				\caption{Relative errors of the cut XFEM for Example 6.2.}  \label{xfem2example2err}
	\begin{tabular}{|c|c|c|c|c|c|c|c|c|c|c|}
 				\hline
 				N &$\frac{|y-y_h|_1}{|y|_1}$&order&$\frac{\|y-y_h\|_0}{\|y\|_0}$&order&$\frac{|p-p_h|_1}{|p|_1}$&order&$\frac{\|p-p_h\|_0}{\|p\|_0}$&order&$\frac{\|u-u_h\|_0}{\|u\|_0}$&order\\
 				\hline
 				39 & 0.1089& &0.0067& &0.1456& &0.0106& &0.0108&   \\
 				\hline
 				49 & 0.0880&0.94&0.0044&1.85&0.1162&0.99&0.0067&2.00&0.0068&2.02 \\ 
 				\hline
 				59& 0.0738&0.94&0.0031&1.89&0.0966&0.99&0.0046&2.02&0.0047&1.97 \\
 				\hline
 				69&0.0636&0.95&0.0023&1.89&0.0827&0.99&0.0034&2.04& 0.0034 &2.05 \\
 				\hline
 				79&0.0559&0.96&0.0018&1.92&0.0723&0.99&0.0026&2.06&0.0026 &2.00
 				\\
 				\hline
 				
 			\end{tabular}
 	 		\end{table}
		
			\begin{table}[!hbtp]
 			\centering
 				\caption{Relative errors of the classic XFEM for Example 6.2.} \label{xfem1example2err}
 		\begin{tabular}{|c|c|c|c|c|c|c|c|c|c|c|}
 				\hline
 				N &$\frac{|y-y_h|_1}{|y|_1}$&order&$\frac{\|y-y_h\|_0}{\|y\|_0}$&order&$\frac{|p-p_h|_1}{|p|_1}$&order&$\frac{\|p-p_h\|_0}{\|p\|_0}$&order&$\frac{\|u-u_h\|_0}{\|u\|_0}$&order\\
 				\hline
 				39&0.0545& &0.0024& &0.1380& &0.0093& &0.0099&   \\
 				\hline
 				49&0.0445&0.88&0.0015&1.90&0.1103&0.98&0.0059&1.97&0.0062&2.02 \\ 
 				\hline
 				59&0.0377&0.90&0.0011&1.91&0.0919&0.99&0.0041&1.97&0.0043&1.96  \\
 				\hline
 				69&0.0327&0.91&0.0008&1.91&0.0787&0.99&0.0030&1.98&0.0031&2.02 \\
 				\hline
 				79&0.0277&1.22&0.0006&2.23&0.0685&1.02&0.0023&2.04&0.0024&1.99\\
 				\hline
 				
 			\end{tabular}
 		\end{table}

			\begin{figure}[!hbtp]
 			\centering
 			\begin{minipage}{7cm}
 				\includegraphics[width=7cm]{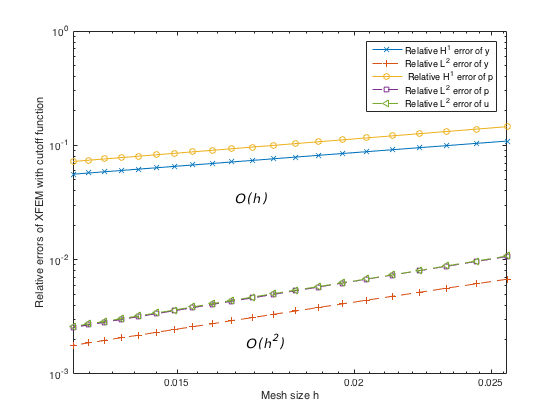}
 			\end{minipage}
 			\begin{minipage}{7cm}
 				\includegraphics[width=7cm]{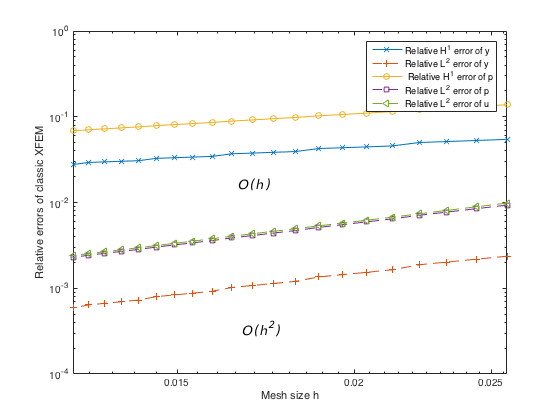}
 			\end{minipage}
 		 	\caption{Convergence history of the cut XFEM   (left) and  classic XFEM    (right) for Example 6.2.}
			\label{figexam2}
 		\end{figure} 

\begin{example} { A constrained problem in a non-convex domain.} 
\end{example} 
Let  $\Omega$ be a  $\frac34$ unit circle (Figure \ref{lshape}),  and take $\alpha=0.01$.  
We consider  the    optimal control problem \eqref{eqobjective} subject to 
 		\begin{equation}\label{63}
 		\left\{
 		\begin{array}{rll}
 		& -\Delta y+y=u+f, & \text{ in }\Omega, \\
 		& y=0, & \text{ on }\partial\Omega
 		\end{array}
 		\right.
 		\end{equation}
	with the control constraint
\begin{equation}\label{eqcontrol2}
-0.3\leq u \leq 1, \text{ a.e. on } \Omega.
\end{equation}
  Set $r_s=0.5$,  and let $\chi(r)$ be a polynomial with $r_0=0.01$, $r_1=0.99$, and   let 
 \begin{align*}
y&=(r^{\frac32}-r^{\frac52}) sin(\lambda \theta ),\\
p&=\alpha (r^{\frac32}-r^{\frac52}) sin(\lambda \theta ), \\
u&=\min \left  \{1,\max \left \{-\frac{p}{\alpha},-0.3 \right \} \right \}
\end{align*}
 be the  analytical state, co-state and control, respectively. We  note 	that  the co-state $p$ satisfies 
 	\begin{equation*}
 		\left\{
 		\begin{array}{rll}
 		& -\Delta p+p =y-y_d & \text{ in }\Omega, \\
 		& p=0, & \text{ on }\partial\Omega.
 		\end{array}
 		\right.
 		\end{equation*}
	
	We apply the  MATLAB mesh generator  $distmesh2d$ (\cite{Persson2004A})   to generate  quasi-uniform triangular meshes (Figure \ref{lshape}): for $h=1/4, 1/8, 1/12, 1/16, \cdots$, 
\begin{align*}\footnotesize
&fd=@(q) (sqrt(sum(q.^2,2))-1)+1*(q(:,1)> 0+eps\  \& \ q(:,2)< 0 -eps );\\
&[q,t]=distmesh2d(fd,@huniform,h,[-1,-1;1,1;],[(0:h:1)',(0:h:1)'*0;(0:h:1)'*0,(-1:h:0)';]);
\end{align*}
Tables \ref{tb:7}-\ref{tb:6} show  results of the relative errors between  $(y_h,p_h,u_h)$ and $(y,p,u)$, and Figure \ref{figexam2} shows the 	 relative errors   against  the number of the mesh nodes, $ND$. It is known that the optimal convergence orders of the errors (against $ND$) in $H^1$ semi-norm and $L^2$ norm are $1/2, 1$ respectively. We can see   that the proposed methods  yield  optimal convergence rates. We note that in $\cite{Apel2007Optimal}$,   graded meshes and a post-processing procedure were used to acquire optimal convergence for the  $P_1$ element. 

In Figures \ref{fg:8}-\ref{fg:10}, we also show the cut XFEM solutions of the state, control and boundary of the active set at the mesh with $h=1/8$.

\begin{figure} [!hbtp]
	\centering
			\begin{minipage}{3.5cm}
 				\includegraphics[width=3.5cm]{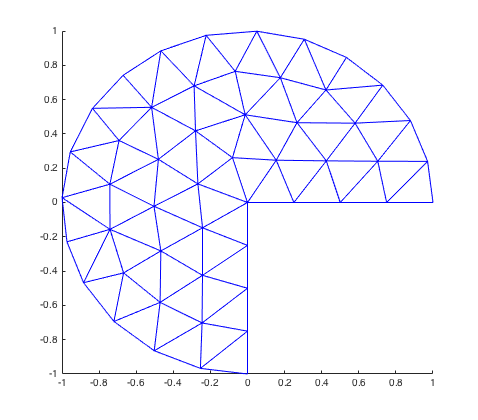}
 			\end{minipage}
 			\begin{minipage}{3.5cm}
 				\includegraphics[width=3.5cm]{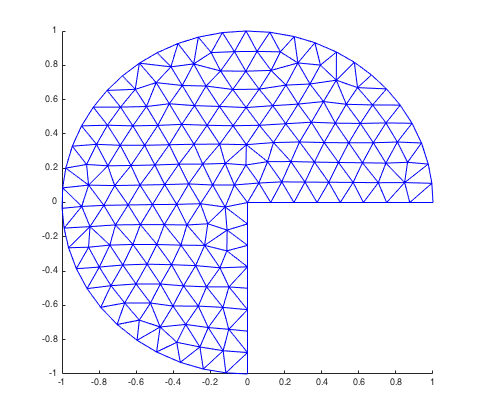}
 			\end{minipage}
			\begin{minipage}{3.5cm}
 				\includegraphics[width=3.5cm]{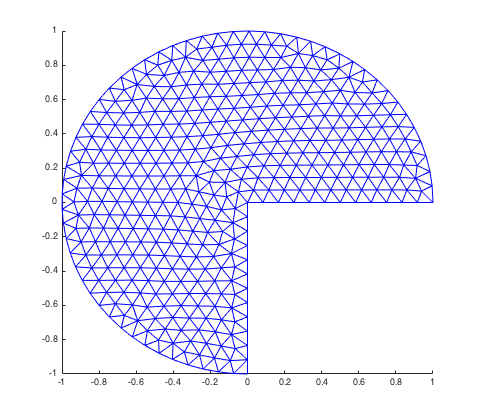}
 			\end{minipage}
                       \begin{minipage}{3.5cm}
 				\includegraphics[width=3.5cm]{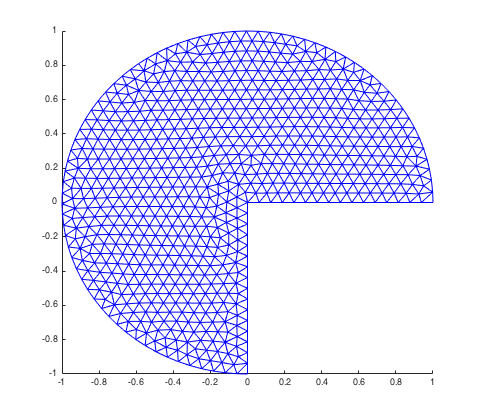}%
 			\end{minipage}
	\caption{Domain and meshes for Example 3: $h=1/4 (ND=51), 1/8 (ND=186),1/12 (ND=419), 1/16 (ND=729)$ (left to right)}
	\label{lshape}
	
\end{figure}

 		\begin{table}[!hbtp]
	\centering
	\caption{Relative errors of the cut XFEM  for Example 6.3.} \label{tb:7}
	\begin{tabular}{|c|c|c|c|c|c|c|c|c|c|c|}
		\hline
		$1/h$ &$\frac{|y-y_h|_1}{|y|_1}$&order&$\frac{\|y-y_h\|_0}{\|y\|_0}$&order&$\frac{|p-p_h|_1}{|p|_1}$&order&$\frac{\|p-p_h\|_0}{\|p\|_0}$&order&$\frac{\|u-u_h\|_0}{\|u\|_0}$&order\\
		\hline

4&   0.1535 &   &    0.0560  &    &  0.2421  &   &    0.2801 &    &   0.2230 &   \\
\hline
8 &    0.0776  &  0.53 &    0.0143 &    1.06 &    0.0934 &   0.74  &  0.0778 &    0.99 &  0.0553   & 1.08 \\
\hline
12 & 0.0506  &  0.53 &    0.0062 &    1.02 &    0.0555&   0.64 &    0.0343&    1.01 &   0.0239  &   1.03 \\
\hline
16 &   0.0383 &    0.50 &    0.0036 &    1.01 &   0.0403 &   0.58 &    0.0190  &   
1.06 &   0.0133 &   1.05 \\
		\hline
		
	\end{tabular}
\end{table}

 		\begin{table}[!hbtp]
	\centering
	\caption{Relative errors of the classic XFEM for Example 6.3.} \label{tb:6}
	\begin{tabular}{|c|c|c|c|c|c|c|c|c|c|c|}
		\hline
		$1/h$ &$\frac{|y-y_h|_1}{|y|_1}$&order&$\frac{\|y-y_h\|_0}{\|y\|_0}$&order&$\frac{|p-p_h|_1}{|p|_1}$&order&$\frac{\|p-p_h\|_0}{\|p\|_0}$&order&$\frac{\|u-u_h\|_0}{\|u\|_0}$&order\\
		\hline
		
   4 &  0.0965 &   &    0.0388  &   &   0.1628 &    &    0.1758 &   &    0.1368 &  \\
   \hline
8 &    0.0567 &    0.41 &    0.0100 &    1.05 &    0.0657 &   0.70 &   0.0471 & 1.02 &    0.0359 &   1.03 \\
\hline
12 &    0.0378 &   0.50 &   0.0044 &   1.01 &    0.0410 &   0.58 &    0.0228   & 0.89   & 0.0165 &   0.96 \\
\hline
16&   0.0287 &   0.49 &    0.0025 &   1.03 &    0.0302 &    0.55 &    0.0133    & 0.98 &   0.0094 &   1.01 \\
		\hline
		
	\end{tabular}
\end{table}

 		\begin{figure}[!hbtp]
	\centering
	\begin{minipage}{7cm}
		\includegraphics[width=7cm]{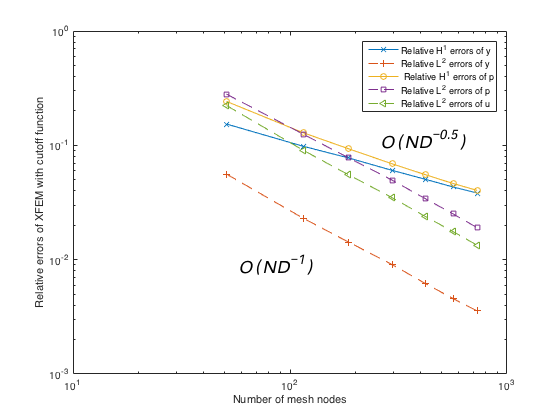}
		\end{minipage}
	\begin{minipage}{7cm}
		\includegraphics[width=7cm]{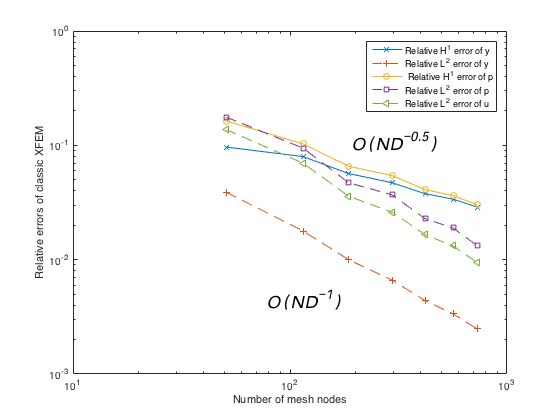}

	\end{minipage}
		\caption{Convergence history of the cut XFEM (left) and  classic XFEM (right) for Example 6.3.}
\end{figure} 


\begin{figure}[!hbtp]
	\centering
	\begin{minipage}{7cm}
		\includegraphics[width=7cm]{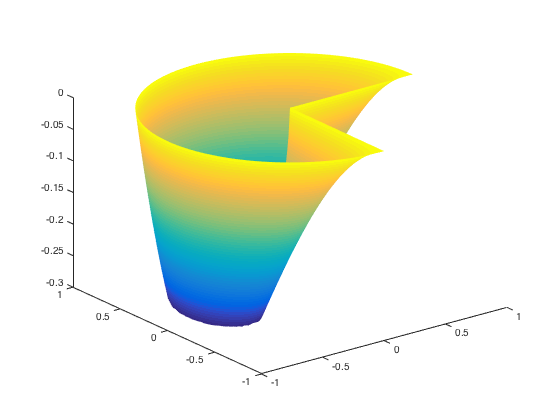}
	\end{minipage}
	\begin{minipage}{7cm}
		\includegraphics[width=7cm]{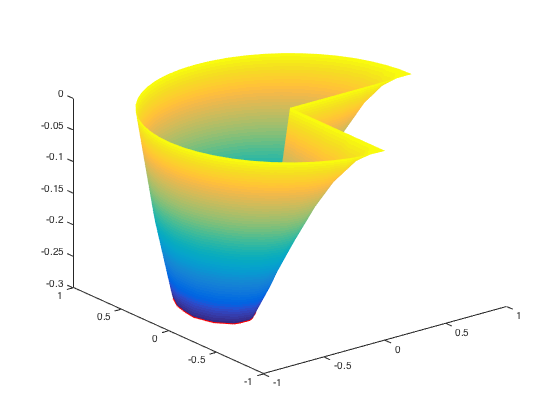}
	\end{minipage}
\caption{The exact  control $u$ (left)  and the discrete control $u_h$ by the cut XFEM (right)   for Example 6.3.} \label{fg:8}
\end{figure} 

\begin{figure}[!hbtp]
	\centering
	\begin{minipage}{7cm}
		\includegraphics[width=7cm]{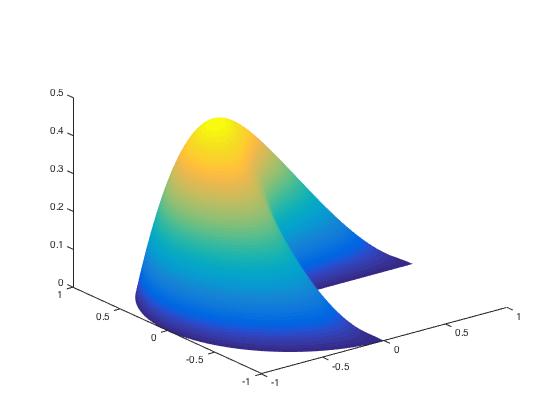}
	\end{minipage}
	\begin{minipage}{7cm}
		\includegraphics[width=7cm]{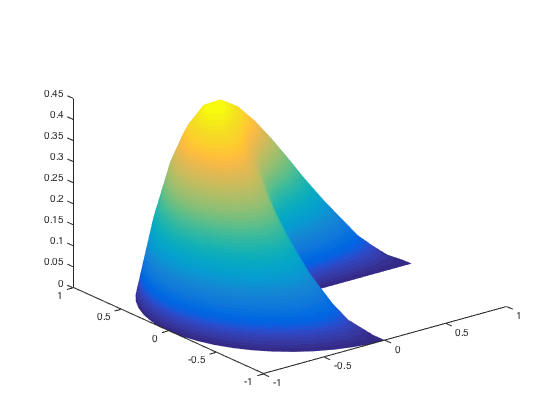}
	\end{minipage}
	\caption{The exact state $y$ (left) and  the discrete state $y_h$ by the cut XFEM  for Example 6.3.} \label{fg:9}
\end{figure} 

\begin{figure}[!hbtp]
	\centering
	\begin{minipage}{7cm}
		\includegraphics[width=7cm]{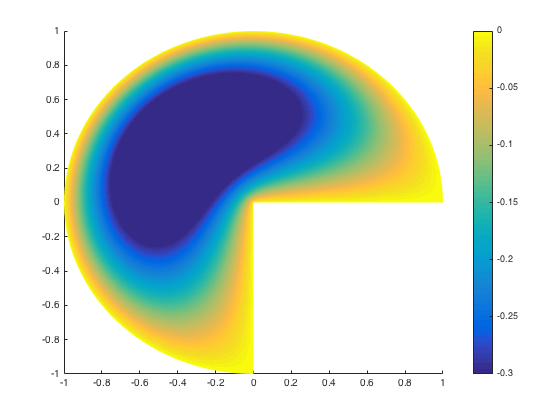}
	\end{minipage}
	\begin{minipage}{7cm}
		\includegraphics[width=7cm]{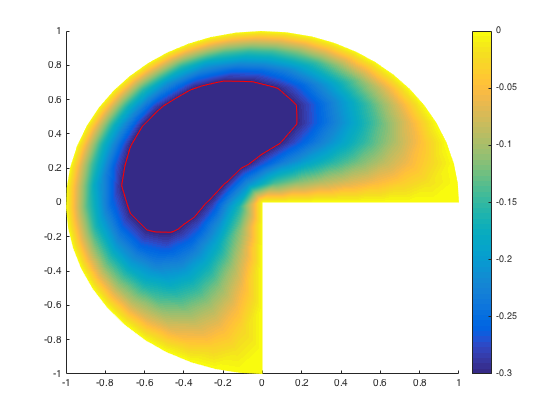}
	\end{minipage}
	\caption{The  active and inactive sets of the exact control $u$  (left) and the sets of the discrete control $u_h$ by the cut XFEM with $h=1/8$:    the red line is the boundary of active set for Example 6.3. } \label{fg:10}
\end{figure}




\begin{thebibliography}{10}

\bibitem{Apel2012Finite}
Thomas Apel, Johannes Pfefferer, and Arnd Sch.
\newblock Finite element error estimates for neumann boundary control problems
  on graded meshes.
\newblock {\em Computational Optimization \& Applications}, 52(1):3--28, 2012.

\bibitem{Apel2007Optimal}
Thomas Apel, Arnd Rosch, and Gunter Winkler.
\newblock Optimal control in non-convex domains: a priori discretization error
  estimates.
\newblock {\em Calcolo}, 44(3):137--158, 2007.

\bibitem{Apel1996Graded}
Thomas Apel, Anna-Margarete S\"{a}ndig, and John~R. Whiteman.
\newblock Graded mesh refinement and error estimates for finite element
  solutions of elliptic boundary value problems in non-smooth domains.
\newblock {\em Mathematical Methods in the Applied Sciences}, 19(1):63--85,
  1996.

\bibitem{Apel2009Optimal}
Thomas Apel and Gunter Winkler.
\newblock Optimal control under reduced regularity.
\newblock {\em Applied Numerical Mathematics}, 59(9):2050--2064, 2009.

\bibitem{Babu2017Strongly}
Ivo Babuska, Uday Banerjee, and Kenan Kergrene.
\newblock Strongly stable generalized finite element method: Application to
  interface problems.
\newblock {\em Computer Methods in Applied Mechanics \& Engineering}, 2017.

\bibitem{Babuka1979Direct}
Ivo Babuska, R~B Kellogg, and Juhani Pitkaranta.
\newblock Direct and inverse error estimates for finite elements with mesh
  refinements.
\newblock {\em Numerische Mathematik}, 33(4):447--471, 1979.

\bibitem{Babuska2010Optimal}
Ivo Babuska and Robert Lipton.
\newblock Optimal local approximation spaces for generalized finite element
  methods with application to multiscale problems.
\newblock {\em Siam Journal on Multiscale Modeling \& Simulation},
  9(1):373--406, 2010.

\bibitem{babuska1997the}
Ivo Babuska and Jens~Markus Melenk.
\newblock The partition of unity method.
\newblock {\em International Journal for Numerical Methods in Engineering},
  40(4):727--758, 1997.

\bibitem{Babu1972A}
Ivo Babuska and Michael~B Rosenzweig.
\newblock A finite element scheme for domains with corners.
\newblock {\em Numerische Mathematik}, 20(1):1--21, 1972.

\bibitem{Becker00ada}
Roland Becker, Hartmut Kapp, and Rolf Rannacher.
\newblock Finite element methods for optimal control of partial differential
  equations: Basic concept.
\newblock {\em Siam Journal on Control and Optimization}, 39(1):113--132, 2000.

\bibitem{Belytschko1999Elastic}
Ted Belytschko and T~Black.
\newblock Elastic crack growth in finite elements with minimal remeshing.
\newblock {\em International Journal for Numerical Methods in Engineering},
  45(5):601--620, 1999.

\bibitem{Belyschkos09rev}
Ted Belytschko, Robert Gracie, and Giulio Ventura.
\newblock A review of extended/generalized finite element methods for material
  modeling.
\newblock {\em Modelling and Simulation in Materials Science and Engineering},
  17(4):043001, 2009.

\bibitem{Benedix09pos}
Olaf Benedix and Boris Vexler.
\newblock A posteriori error estimation and adaptivity for elliptic optimal
  control problems with state constraints.
\newblock {\em Computational Optimization and Applications}, 44(1):3--25, 2009.

\bibitem{Berrone2013On}
Stefano Berrone, Sandra Pieraccini, and Stefano Scialo.
\newblock On simulations of discrete fracture network flows with an
  optimization-based extended finite element method.
\newblock {\em Siam Journal on Scientific Computing}, 35(2):A908--A935, 2013.

\bibitem{Blum1982On}
Heribert Blum and Manfred Dobrowolski.
\newblock On finite element methods for elliptic equations on domains with
  corners.
\newblock {\em Computing}, 28(1):53--63, 1982.

\bibitem{Casas86con}
Eduardo Casas.
\newblock Control of an elliptic problem with pointwise state constraints.
\newblock {\em Siam Journal on Control and Optimization}, 24(6):1309--1318,
  1986.

\bibitem{Falk73app}
Richard~S Falk.
\newblock Approximation of a class of optimal control problems with order of
  convergence estimates.
\newblock {\em Journal of Mathematical Analysis and Applications},
  44(1):28--47, 1973.

\bibitem{Gong17ada}
Wei Gong and Ningning Yan.
\newblock Adaptive finite element method for elliptic optimal control problems:
  convergence and optimality.
\newblock {\em Numerische Mathematik}, 135(4):1121--1170, 2017.

\bibitem{Grisvard1985Elliptic}
Pierre Grisvard.
\newblock {\em Elliptic problems in nonsmooth domains}.
\newblock Pitman Advanced Pub. Program, 1985.

\bibitem{mmuller10goa}
Michael Hintermuller and Ronald H~W Hoppe.
\newblock Goal-oriented adaptivity in pointwise state constrained optimal
  control of partial differential equations.
\newblock {\em Siam Journal on Control and Optimization}, 48(8):5468--5487,
  2010.

\bibitem{Hinze05var}
Michael Hinze.
\newblock A variational discretization concept in control constrained
  optimization: the linear-quadratic case.
\newblock {\em Computational Optimization and Applications}, 30(1):45--61,
  2005.

\bibitem{Hinze2009Variational}
Michael Hinze and Morten Vierling.
\newblock Variational discretization and semi-smooth newton methods;
  implementation, convergence and globalization in pde constrained optimization
  with control constraints.
\newblock {\em Immunogenetics}, 55(7):429--436, 2009.

\bibitem{Jr2003The}
Patrick~Ciarlet Jr and Jiwen He.
\newblock The singular complement method for 2d scalar problems.
\newblock {\em Comptes Rendus Mathematique}, 336(4):353--358, 2003.

\bibitem{Kergrene2016Stable}
Kenan Kergrene, Ivo Babuska, and Uday Banerjee.
\newblock Stable generalized finite element method and associated iterative
  schemes; application to interface problems.
\newblock {\em Computer Methods in Applied Mechanics \& Engineering},
  305:1--36, 2016.

\bibitem{Li02ada}
Ruo Li, Wenbin Liu, Heping Ma, and Tao Tang.
\newblock Adaptive finite element approximation for distributed elliptic
  optimal control problems.
\newblock {\em Siam Journal on Control and Optimization}, 41(5):1321--1349,
  2002.

\bibitem{Meyer2003Superconvergence}
Christian Meyer and Arnd Rosch.
\newblock Superconvergence properties of optimal control problems.
\newblock {\em Siam Journal on Control and Optimization}, 43(3):970--985, 2004.

\bibitem{moes1999a}
Nicolas Moes, John~E Dolbow, and Ted Belytschko.
\newblock A finite element method for crack growth without remeshing.
\newblock {\em International Journal for Numerical Methods in Engineering},
  46(1):131--150, 1999.

\bibitem{Nicaise2011Optimal}
Serge Nicaise, Yves Renard, and Elie Chahine.
\newblock Optimal convergence analysis for the extended finite element method.
\newblock {\em International Journal for Numerical Methods in Engineering},
  86(4-5):528--548, 2011.

\bibitem{Oganesyan1968Variational}
L.~A. Oganesyan and L.~A. Rukhovets.
\newblock Variational-difference schemes for linear second-order elliptic
  equations in a two-dimensional region with piecewise smooth boundary.
\newblock {\em Ussr Computational Mathematics \& Mathematical Physics},
  8(1):129,138,142--135,139,152, 1968.

\bibitem{Persson2004A}
Per~Olof Persson and Gilbert Strang.
\newblock A simple mesh generator in matlab.
\newblock {\em Siam Review}, 46(2):329--345, 2004.

\bibitem{Rosch17rel}
Arnd Rosch, Kunibert~G Siebert, and S~Steinig.
\newblock Reliable a posteriori error estimation for state-constrained optimal
  control.
\newblock {\em Computational Optimization and Applications}, 68(1):121--162,
  2017.

\bibitem{Rosch12pri}
Arnd Rosch and Simeon Steinig.
\newblock A priori error estimates for a state-constrained elliptic optimal
  control problem.
\newblock {\em Mathematical Modelling and Numerical Analysis},
  46(5):1107--1120, 2012.

\bibitem{Rosch12pos}
Arnd Rosch and Daniel Wachsmuth.
\newblock A-posteriori error estimates for optimal control problems with state
  and control constraints.
\newblock {\em Numerische Mathematik}, 120(4):733--762, 2012.

\bibitem{Schatz1979Maximum}
A.~H. Schatz and L.~B. Wahlbin.
\newblock Maximum norm estimates in the finite element method on plane
  polygonal domains.
\newblock {\em Mathematics of Computation}, 33(146):465--492, 1979.

\bibitem{Schneider16pos}
Rene Schneider and Gerd Wachsmuth.
\newblock A posteriori error estimation for control-constrained,
  linear-quadratic optimal control problems.
\newblock {\em SIAM Journal on Numerical Analysis}, 54(2):1169--1192, 2016.

\bibitem{Sevilla2010Polynomial}
David Sevilla and Daniel Wachsmuth.
\newblock Polynomial integration on regions defined by a triangle and a conic.
\newblock In {\em International Symposium on Symbolic and Algebraic
  Computation}, pages 163--170, 2010.

\bibitem{Shen2010Stability}
Yongxing Shen and Adrian Lew.
\newblock Stability and convergence proofs for a discontinuous-galerkin-based
  extended finite element method for fracture mechanics.
\newblock {\em Computer Methods in Applied Mechanics \& Engineering},
  199(37):2360--2382, 2010.

\bibitem{Soghrati2012A}
Soheil Soghrati and Philippe~H. Geubelle.
\newblock A 3d interface-enriched generalized finite element method for weakly
  discontinuous problems with complex internal geometries.
\newblock {\em Computer Methods in Applied Mechanics \& Engineering},
  217-220(1):46--57, 2012.

\bibitem{Strang1973An}
Gilbert Strang and George~J Fix.
\newblock {\em An analysis of the finite element method}.
\newblock Prentice-Hall, 1973.

\bibitem{Strouboulis00des}
T~Strouboulis, Ivo Babuska, and K~Copps.
\newblock The design and analysis of the generalized finite element method.
\newblock {\em Computer Methods in Applied Mechanics and Engineering},
  181:43--69, 2000.

\bibitem{Strouboulis2006The}
Theofanis Strouboulis, Ivo Babuska, and Realino Hidajat.
\newblock The generalized finite element method for Helmholtz equation: Theory, computation, and open problems.
\newblock {\em Computer Methods in Applied Mechanics \& Engineering},
  195(37-40):4711--4731, 2006.

\bibitem{Tiba76err}
Dan Tiba and Fredi Troltzsch.
\newblock Error estimates for the discretization of state constrained convex
  control problems.
\newblock {\em Numerical Functional Analysis and Optimization}, 17:1005--1028,
  1996.

\bibitem{Tr2010Optimal}
Fredi Troltzsch.
\newblock Optimal control of partial differential equations: Theory, methods
  and applications.
\newblock {\em Siam Journal on Control \& Optimization}, 112(2):399, 2010.

\bibitem{Yang2018}
Chao Chao Yang, Tao Wang, and Xiaoping Xie.
\newblock An interface-unfitted finite element method for elliptic interface
  optimal control problem.
\newblock {\em arXiv:1805.04844v2}, 2018.

\end{thebibliography}
\end{document}